\definecolor{dark-red}{rgb}{0.4,0.15,0.15}
\definecolor{dark-blue}{rgb}{0.15,0.15,0.4}
\definecolor{medium-blue}{rgb}{0,0,0.5}
\newcommand{\af}{\mathfrak{a}}
\newcommand{\BB}{\mathcal{B}}
\newcommand{\C}{\mathbb{C}}
\newcommand{\CC}{\mathcal{C}}
\newcommand{\dee}{\partial}
\newcommand{\e}{\varepsilon}
\newcommand{\FF}{\mathcal{F}}
\newcommand{\Hb}{\mathbb{H}}
\newcommand{\Q}{\mathbb{Q}}
\newcommand{\R}{\mathbb{R}}
\newcommand{\T}{\mathbb{T}}
\newcommand{\Wscr}{\mathscr{W}}
\newcommand{\Z}{\mathbb{Z}}
\DeclareMathOperator{\ad}{ad}
\DeclareMathOperator{\arsinh}{arsinh}
\DeclareMathOperator{\GL}{GL}
\DeclareMathOperator{\Ht}{ht}
\DeclareMathOperator{\Lip}{Lip}
\DeclareMathOperator{\sgn}{sgn}
\DeclareMathOperator{\SL}{SL}
\numberwithin{equation}{section}
\newtheorem{theorem}[equation]{Theorem}
\newtheorem{lemma}[equation]{Lemma}
\theoremstyle{remark}
\newtheorem{remark}[equation]{Remark}
\theoremstyle{definition}
\newtheorem{definition}[equation]{Definition}
\newtheorem{example}[equation]{Example}
\begin{document}

\title{Quantitative Equidistribution on Hyperbolic Surfaces and Arithmetic Applications}

\author{Peter Humphries}

\address{Department of Mathematics, University of Virginia, Charlottesville, VA 22904, USA}

\email{\href{mailto:pclhumphries@gmail.com}{pclhumphries@gmail.com}}

\urladdr{\href{https://sites.google.com/view/peterhumphries/}{https://sites.google.com/view/peterhumphries/}}

\thanks{The author was supported by the National Science Foundation (grant DMS-2302079) and by the Simons Foundation (award 965056).}

\begin{abstract}
The Wasserstein distance quantifies the distance between two probability measures on a metric space. We prove an analogue of the Berry--Esseen inequality for the Wasserstein distance on a finite area hyperbolic surface. This inequality controls the Wasserstein distance via an average of Weyl sums, which are integrals of Maa\ss{} cusp forms and Eisenstein series with respect to these probability measures. As applications, we prove upper bounds for the Wasserstein distance for some equidistribution problems on the modular surface $\SL_2(\Z) \backslash \Hb$, namely Duke's theorems on the equidistribution of Heegner points and of closed geodesics and Watson's theorem on the mass equidistribution of Hecke--Maa\ss{} cusp forms conditionally under the assumption of the generalised Lindel\"{o}f hypothesis.
\end{abstract}

\maketitle

\section{Introduction}

\subsection{The Wasserstein Distance and the Berry--Esseen Inequality}

Let $(X,\rho)$ be a metric space. A sequence of Borel probability measures $(\mu_k)$ on $X$ is said to \emph{equidistribute} on $X$ with respect to a limiting Borel probability measure $\mu$ if
\[\lim_{k \to \infty} \int_{X} f(x) \, d\mu_k(x) = \int_{X} f(x) \, d\mu(x)\]
for every continuous bounded function $f : X \to \C$. By the Portmanteau theorem, this is equivalent to the statement that $\lim_{k \to \infty} \mu_k(B) = \mu(B)$ for every $\mu$-continuity set $B \subseteq X$ (namely a Borel set whose boundary has $\mu$-measure zero). Similarly, it is also equivalent to the statement that
\[\lim_{k \to \infty} \int_{X} f(x) \, d\mu_k(x) = \int_{X} f(x) \, d\mu(x)\]
for every bounded Lipschitz function $f : X \to \R$, where we recall that $f$ is an $L$-Lipschitz function for some $L \geq 0$ if $\left|f(x) - f(y)\right| \leq L \rho(x,y)$ for all $x,y \in X$.

To quantify the rate of equidistribution is to give a measure of the distance between $\mu_k$ and $\mu$. One such quantification of the distance between two probability measures is the $1$-Wasserstein distance. Given two Borel probability measures $\nu_1,\nu_2$ on a Polish space $(X,\rho)$, the $1$-Wasserstein distance between $\nu_1$ and $\nu_2$ is
\[\Wscr_1(\nu_1,\nu_2) \coloneqq \inf_{\pi \in \Pi(\nu_1,\nu_2)} \int_{X \times X} \rho(x,y) \, d\pi(x,y),\]
where $\Pi(\nu_1,\nu_2)$ denotes the set of Borel probability measures $\pi$ on $X \times X$ with marginals $\nu_1$ and $\nu_2$, so that $\pi(B \times X) = \nu_1(B)$ and $\pi(X \times B) = \nu_2(B)$ for every Borel set $B \subseteq X$. Informally, this measures the cost of moving from the measure $\nu_1$ to the measure $\nu_2$. The $1$-Wasserstein distance is of central importance in optimal transport; see, for example, \cite{Vil03}. Moreover, the $1$-Wasserstein distance defines a metric on the space of all Borel probability measures $\nu$ on $X$ for which $\int_{X} \rho(x_0,x) \, d\nu(x)$ is finite for some (and hence for all) $x_0 \in X$. The convergence of a sequence of measures $(\mu_k)$ to a limiting measure $\mu$ with respect to this metric is simply equidistribution.

The definition of the $1$-Wasserstein distance is intrinsic and satisfies various invariance properties and natural inequalities; see, for example, \cite[Theorem 1.2]{KU25} for several such properties. In general, however, the $1$-Wasserstein distance is not easily estimated except in special situations. When $X = \R$ and $\rho(x,y) = |x - y|$, one has the simpler formulation of the $1$-Wasserstein distance in terms of the cumulative distribution functions of $\nu_1$ and $\nu_2$, namely
\[\Wscr_1(\nu_1,\nu_2) = \int_{-\infty}^{\infty} |\nu_1((-\infty,x]) - \nu_2((-\infty,x])| \, dx.\]
The Berry--Esseen inequality then bounds this quantity in terms of the Fourier transforms $\widehat{\nu_j}(t) \coloneqq \int_{-\infty}^{\infty} e^{-2\pi itx} \, d\nu_j(x)$.

\begin{theorem}[{Berry--Esseen inequality \cite[Corollary 8.3]{Bob16}}]
Let $\nu_1,\nu_2$ be Borel probability measures on $\R$. For $T \geq 1$, we have that
\begin{multline*}
\Wscr_1(\nu_1,\nu_2)	\\
\leq \frac{16\sqrt{2}}{\sqrt{3}\pi T} + \left(\frac{1}{2\pi} \int_{-T}^{T} \left|\frac{\widehat{\nu_1}(t) - \widehat{\nu_2}(t)}{t}\right|^2 \, dt\right)^{1/2} + \left(\frac{1}{(2\pi)^3} \int_{-T}^{T} \left|\frac{d}{dt} \frac{\widehat{\nu_1}(t) - \widehat{\nu_2}(t)}{t}\right|^2 \, dt\right)^{1/2}.
\end{multline*}
\end{theorem}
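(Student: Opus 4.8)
The plan is to reduce everything to the one-dimensional $L^1$ formula already recorded in the excerpt, namely $\Wscr_1(\nu_1,\nu_2) = \int_{-\infty}^{\infty} |\Delta(x)|\,dx$, where $\Delta(x) \coloneqq \nu_1((-\infty,x]) - \nu_2((-\infty,x])$ is the difference of cumulative distribution functions, and then to recognise the two integrands on the right-hand side as the Fourier transform of $\Delta$ and its derivative. Since $\Delta$ vanishes at $\pm\infty$ and $d\Delta = d(\nu_1 - \nu_2)$, a single integration by parts gives
\[\widehat{\nu_1}(t) - \widehat{\nu_2}(t) = \int_{-\infty}^{\infty} e^{-2\pi itx}\,d\Delta(x) = 2\pi i t \int_{-\infty}^{\infty} \Delta(x) e^{-2\pi itx}\,dx = 2\pi i t\, \widehat{\Delta}(t),\]
so that $\widehat{\Delta}(t) = (\widehat{\nu_1}(t) - \widehat{\nu_2}(t))/(2\pi i t)$. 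Consequently the first integrand equals $2\pi|\widehat{\Delta}(t)|^2$ and the second equals $\tfrac{1}{2\pi}|\widehat{\Delta}'(t)|^2$, and the whole problem reduces to bounding $\|\Delta\|_{L^1(\R)}$ by the $L^2$ norms of $\widehat{\Delta}$ and $\widehat{\Delta}'$ over $[-T,T]$. Finiteness of $\Wscr_1(\nu_1,\nu_2)$ forces the first absolute moments of the $\nu_j$ to be finite, whence $\widehat{\Delta}$ is bounded near the origin and lies in $L^2(\R)$, so these Fourier-analytic manipulations are legitimate.

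The main estimate is a weighted Cauchy--Schwarz inequality converting the $L^1$ norm into a weighted $L^2$ norm. Writing
\[\int_{-\infty}^{\infty} |\Delta(x)|\,dx = \int_{-\infty}^{\infty} \frac{|\Delta(x)|\sqrt{1 + (2\pi x)^2}}{\sqrt{1 + (2\pi x)^2}}\,dx,\]
Cauchy--Schwarz together with the elementary evaluation $\int_{-\infty}^{\infty} (1 + (2\pi x)^2)^{-1}\,dx = \tfrac12$ yields
\[\Wscr_1(\nu_1,\nu_2) \leq \frac{1}{\sqrt{2}}\left(\int_{-\infty}^{\infty} (1 + (2\pi x)^2)|\Delta(x)|^2\,dx\right)^{1/2}.\]
By Plancherel's theorem, together with the fact that multiplication by $-2\pi i x$ on the spatial side corresponds to differentiation in $t$ on the Fourier side, the weighted $L^2$ norm equals $\|\widehat{\Delta}\|_{L^2(\R)}^2 + \|\widehat{\Delta}'\|_{L^2(\R)}^2$. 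This already produces the two \emph{types} of term appearing in the theorem, up to the precise constants and, crucially, up to the restriction of the frequency integrals from $\R$ to $[-T,T]$.

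It remains to truncate the frequency integrals to $[-T,T]$, and this is where I expect the real work to lie. The contribution of $|t| > T$ to $\|\widehat{\Delta}\|_{L^2}^2$ is harmless: since $|\widehat{\nu_1}(t) - \widehat{\nu_2}(t)| \leq 2$ for all $t$, one has the crude pointwise bound $|\widehat{\Delta}(t)| \leq (\pi|t|)^{-1}$, giving $\int_{|t|>T} |\widehat{\Delta}(t)|^2\,dt \leq 2/(\pi^2 T)$. The genuine obstacle is the derivative term: $\widehat{\Delta}'$ involves the derivatives of the characteristic functions $\widehat{\nu_j}$, which admit no uniform bound, so the tail $\int_{|t|>T}|\widehat{\Delta}'(t)|^2\,dt$ cannot be controlled directly; moreover a naive sharp cutoff is illegitimate in any case, because the band-limited truncation of $\Delta$ typically inherits a $1/x$ tail and so fails to be integrable. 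I would therefore replace the sharp truncation by a smoothing argument: convolve $\Delta$ with a fixed mollifier $\eta_T$ whose Fourier transform is supported in $[-T,T]$ and normalised so that $\widehat{\eta_T}(0) = 1$, apply the weighted Cauchy--Schwarz estimate to the smoothed difference $\Delta * \eta_T$ (whose transform is automatically supported in $[-T,T]$), and bound the smoothing error $\|\Delta - \Delta*\eta_T\|_{L^1(\R)}$ separately against the crude tail bound above. Optimising the profile of $\eta_T$ is what produces the clean $\int_{-T}^{T}$ integrals together with the explicit error term $16\sqrt{2}/(\sqrt{3}\pi T)$; carrying out this optimisation and tracking the numerical constant through the product rule for $(\widehat{\Delta}\,\widehat{\eta_T})'$ is the crux of the argument.
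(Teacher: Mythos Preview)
The paper does not contain a proof of this statement: it is quoted verbatim as a known result of Bobkov \cite[Corollary 8.3]{Bob16}, serving only as motivation for the hyperbolic analogues in \hyperref[thm:compact]{Theorems \ref*{thm:compact}} and \ref{thm:noncompact}. There is therefore nothing in the paper against which to compare your attempt.

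That said, your outline is broadly the correct strategy and is close in spirit to Bobkov's own argument. The identification $\widehat{\Delta}(t) = (\widehat{\nu_1}(t) - \widehat{\nu_2}(t))/(2\pi i t)$, the weighted Cauchy--Schwarz step with weight $(1 + (2\pi x)^2)^{1/2}$, and the Plancherel identity producing $\|\widehat{\Delta}\|_2^2 + \|\widehat{\Delta}'\|_2^2$ are all sound. You also correctly diagnose that the sharp truncation is the nontrivial point, since $\widehat{\Delta}'$ carries no uniform decay, and that a mollifier with compactly supported transform is the natural remedy. Where your write-up falls short of a proof is precisely where you say it does: you stop at ``carrying out this optimisation \ldots\ is the crux of the argument'' without doing it. In Bobkov's treatment the smoothing is implemented by convolving with a specific kernel (a rescaled density with Fourier support in $[-1,1]$) and the error $\|\Delta - \Delta * \eta_T\|_{L^1}$ is bounded via a Kolmogorov--Rogozin type estimate, which is what yields the explicit constant $16\sqrt{2}/(\sqrt{3}\pi)$; filling this in, and checking that the product rule on $(\widehat{\Delta}\,\widehat{\eta_T})'$ does not spoil the constants, is genuine work that your sketch defers.
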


Recently, an inequality of this form for the $1$-Wasserstein distance was extended to the setting of the $n$-torus $\T^n = (\R/\Z)^n$ by Bobkov and Ledoux and independently by Borda.

\begin{theorem}[{Bobkov--Ledoux \cite[Proposition 2]{BL21}, Borda \cite[Proposition 3]{Bor21a} (see also \cite[Theorem 1.2 (7)]{KU25})}]
Let $\nu_1$ and $\nu_2$ be Borel probability measures on $\T^n$ and let $\widehat{\nu_j}(m) \coloneqq \int_{\T^n} e^{-2\pi i m \cdot x} \, d\nu_j(x)$ denote the $m$-th Fourier coefficient of $\nu_j$. For $T \geq 1$, we have that
\[\Wscr_1(\nu_1,\nu_2) \leq \frac{4\sqrt{3n}}{T} + \left(\sum_{\substack{m = (m_1,\ldots,m_n) \in \Z^n \setminus \{0\} \\ |m_1|,\ldots,|m_n| \leq T}} \frac{\left|\widehat{\nu_1}(m) - \widehat{\nu_2}(m)\right|^2}{|m|^2}\right)^{1/2}.\]
\end{theorem}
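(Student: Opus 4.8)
The plan is to pass to the Kantorovich--Rubinstein dual description of the $1$-Wasserstein distance,
\[\Wscr_1(\nu_1,\nu_2) = \sup_{\Lip(f) \le 1} \left|\int_{\T^n} f \, d\nu_1 - \int_{\T^n} f \, d\nu_2\right|,\]
so that it suffices to bound $\int_{\T^n} f \, d\mu$ for a fixed $1$-Lipschitz function $f : \T^n \to \R$, where $\mu \coloneqq \nu_1 - \nu_2$ is a real signed measure of total mass $\mu(\T^n) = 0$. Writing $\widehat{\mu}(m) = \widehat{\nu_1}(m) - \widehat{\nu_2}(m)$, the two features I would exploit are that $\widehat{\mu}(0) = 0$ and that the Fourier coefficients of $f$ decay: by Rademacher's theorem $f$ is differentiable almost everywhere with $|\nabla f| \le 1$, so that Parseval applied to $\nabla f$ gives
\[\sum_{m \in \Z^n} |m|^2 \, |\widehat{f}(m)|^2 = \frac{1}{4\pi^2} \int_{\T^n} |\nabla f|^2 \le \frac{1}{4\pi^2}.\]
The difficulty is that $\int_{\T^n} f \, d\mu = \sum_{m \ne 0} \widehat{f}(m) \, \overline{\widehat{\mu}(m)}$ ranges over all frequencies, whereas the target bound only admits the frequencies in the box $B_T \coloneqq \{m : |m_1|,\dots,|m_n| \le T\}$; the remedy is to mollify $f$.

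Concretely, I would fix a kernel $\phi_T(y) = \prod_{j=1}^n k_T(y_j)$, where $k_T$ is a nonnegative trigonometric polynomial on $\T$ of degree at most $T$ with $\int_{\T} k_T = 1$, and split $\int_{\T^n} f \, d\mu = \int_{\T^n} (f - f \ast \phi_T) \, d\mu + \int_{\T^n} (f \ast \phi_T) \, d\mu$. For the main term I would apply Cauchy--Schwarz after Parseval: since $\widehat{f \ast \phi_T}(m) = \widehat{f}(m) \widehat{\phi_T}(m)$ is supported in $B_T$ with $|\widehat{\phi_T}(m)| \le \widehat{\phi_T}(0) = 1$ (as $\phi_T \ge 0$), and since $\widehat{\mu}(0) = 0$,
\[\left|\int_{\T^n} (f \ast \phi_T) \, d\mu\right| \le \left(\sum_{0 \ne m \in B_T} |m|^2 |\widehat{f}(m)|^2\right)^{1/2} \left(\sum_{0 \ne m \in B_T} \frac{|\widehat{\mu}(m)|^2}{|m|^2}\right)^{1/2} \le \frac{1}{2\pi} \left(\sum_{0 \ne m \in B_T} \frac{|\widehat{\nu_1}(m) - \widehat{\nu_2}(m)|^2}{|m|^2}\right)^{1/2},\]
which is dominated by the sum in the statement. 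For the error term I would use that $\phi_T$ is a probability density and $f$ is $1$-Lipschitz to obtain the pointwise bound $\|f - f \ast \phi_T\|_\infty \le \int_{\T^n} \rho(0,y) \, \phi_T(y) \, dy$; since $\mu$ has total variation at most $2$, this contributes at most $2\int_{\T^n} \rho(0,y) \, \phi_T(y) \, dy$, and by Cauchy--Schwarz together with the product structure $\rho(0,y)^2 = \sum_j \rho(0,y_j)^2$,
\[\int_{\T^n} \rho(0,y) \, \phi_T(y) \, dy \le \left(\int_{\T^n} \rho(0,y)^2 \, \phi_T(y) \, dy\right)^{1/2} = \left(n \int_{\T} \rho(0,t)^2 \, k_T(t) \, dt\right)^{1/2}.\]

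What remains---and what I expect to be the crux---is to produce the kernel $k_T$: it must simultaneously be Fourier-supported in $[-T,T]$ (so that the main term truncates to $B_T$) and have second moment $\int_{\T} \rho(0,t)^2 \, k_T(t) \, dt \le 12/T^2$, which then yields the error term $2(n \int_{\T} \rho(0,t)^2 k_T \, dt)^{1/2} \le 4\sqrt{3n}/T$. The naive choice, the Fej\'er kernel, is disqualified here, since its second moment is only of order $1/T$ and would degrade the error term to order $\sqrt{n/T}$; instead one must invoke a Jackson-type kernel (morally the normalised square of a Fej\'er kernel of half the degree), for which Jackson's theorem supplies exactly the quadratic decay $\int_{\T} \rho(0,t)^2 k_T \, dt \ll 1/T^2$ of the second moment. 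Pinning down the explicit constant in this second-moment bound, and hence the constant $4\sqrt{3n}$, is the one genuinely delicate computation; everything else is duality, Parseval, and Cauchy--Schwarz.
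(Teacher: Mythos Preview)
The paper does not itself prove this theorem; it is quoted from \cite{BL21,Bor21a,KU25} to motivate the hyperbolic analogues, \hyperref[thm:compact]{Theorems \ref*{thm:compact}} and \ref{thm:noncompact}. Your argument is correct and is essentially the proof given in those references: Kantorovich--Rubinstein duality, mollification of the test function by a product of Jackson-type kernels so as to truncate the Fourier support to the box $B_T$, Cauchy--Schwarz against the $H^1$ bound $\sum_m |m|^2 |\widehat f(m)|^2 \le (2\pi)^{-2}$ for the main term (indeed you pick up a spare factor of $1/2\pi$ on the sum), and the Lipschitz estimate on $f - f\ast\phi_T$ for the remainder. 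Your diagnosis that the Fej\'er kernel fails here because its second moment is only $O(1/T)$, and that a Jackson kernel is needed to obtain $O(1/T^2)$, is exactly the crux.

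For comparison, the paper's own proof of the hyperbolic analogue follows the same skeleton with two structural adjustments. On $\T^n$ you mollify with a kernel of \emph{compact} Fourier support, which gives the sharp truncation $|m_j|\le T$; in the hyperbolic setting the paper instead takes the Gaussian spectral weight $h(t)=e^{-(t^2+\frac14)/2T^2}$, producing smooth weights $e^{-t_f^2/T^2}$ rather than a cutoff, and the Remark following the first lemma of Section~2 explains that a compactly spectrally supported $h$ cannot have an inverse Selberg--Harish-Chandra transform with the decay needed for \eqref{eqn:karsinhint}, so the sharp-cutoff strategy is genuinely unavailable there. Secondly, rather than passing directly from Lipschitz to $H^1$ via Rademacher's theorem as you do on $\T^n$, the paper first replaces the Lipschitz test function by a smooth approximant $F_\e$ (\hyperref[lem:smoothing]{Lemma \ref*{lem:smoothing}}) before invoking Parseval and Green's identity.
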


Borda has also proven a similar bound on compact connected Lie groups \cite[Theorem 1]{Bor21b} (see also \cite[Theorem 4.1]{KU25}).

Kowalski and Untrau \cite{KU25} recently investigated several equidistribution problems in analytic number theory that are related to exponential sums over finite fields. They gave effective bounds for the $1$-Wasserstein distance for these equidistribution problems by first applying the Berry--Esseen inequality in the relevant setting, then showing that the Fourier coefficients of the measures that appear in this inequality are related to exponential sums, and finally inputting pre-existing bounds for such exponential sums. See also \cite{Gra20,Stei21} for results on the quantification of the equidistribution of quadratic residues in terms of the $1$-Wasserstein distance.

In this paper, we investigate the $1$-Wasserstein distance in another setting relevant to problems in analytic number theory, namely equidistribution on finite area hyperbolic surfaces. We prove forms of the Berry--Esseen inequality in this setting, which are stated in \hyperref[thm:compact]{Theorems \ref*{thm:compact}} and \ref{thm:noncompact}. We then apply this inequality to prove bounds for the $1$-Wasserstein distance in several arithmetic equidistribution problems on finite area hyperbolic surfaces, which we state in \hyperref[sec:arithmetic]{Section \ref*{sec:arithmetic}}.

\subsection{A Berry--Esseen Inequality for Finite Area Hyperbolic Surfaces}

Let $\Hb \coloneqq \{z = x + iy \in \C : x \in \R, \ y \in \R_+\}$ denote the upper half-plane. The group $\SL_2(\R)$ acts on $\Hb$ via M\"{o}bius transformations, namely $gz \coloneqq \frac{az + b}{cz + d}$ for $z \in \Hb$ and $g = \begin{psmallmatrix} a & b \\ c & d \end{psmallmatrix} \in \SL_2(\R)$. The upper half-plane is a Riemannian manifold with the metric derived from the Poincar\'{e} differential $ds^2 = y^{-2} \, dx^2 + y^{-2} \, dy^2$ and associated area form $d\mu(z) = y^{-2} \, dx \, dy$. The distance function is given by
\[\rho(z,w) = \log \frac{|z - \overline{w}| + |z - w|}{|z - \overline{w}| - |z - w|} = 2 \arsinh \frac{|z - w|}{2 \sqrt{\Im(z) \Im(w)}}.\]
We additionally let
\begin{equation}
\label{eqn:udefeq}
u(z,w) \coloneqq \frac{|z - w|^2}{4 \Im(z) \Im(w)} = \sinh^2 \frac{\rho(z,w)}{2}.
\end{equation}
The distance function $\rho$ (and hence also the associated function $u$) and the area form $\mu$ are $\SL_2(\R)$-invariant, in the sense that $\rho(gz,gw) = \rho(z,w)$, $u(gz,gw) = u(z,w)$, and $d\mu(gz) = d\mu(z)$ for $z,w \in \Hb$ and $g \in \SL_2(\R)$ (see \cite[Chapter 1]{Iwa02}).

Let $\Gamma \subset \SL_2(\R)$ be a lattice, so that the quotient space $\Gamma \backslash \Hb$ has finite area with respect to $\mu$. The distance function $\rho$ on $\Hb$ descends to the distance function on $\Gamma \backslash \Hb$ given by
\[\rho_{\Gamma \backslash \Hb}(z,w) \coloneqq \min_{\gamma \in \Gamma} \rho(z,\gamma w).\]
We are interested in quantifying the distance between two probability measures on $\Gamma \backslash \Hb$. Given two Borel probability measures $\nu_1,\nu_2$ on $\Gamma \backslash \Hb$, the $1$-Wasserstein distance between $\nu_1$ and $\nu_2$ is
\[\Wscr_1(\nu_1,\nu_2) \coloneqq \inf_{\pi \in \Pi(\nu_1,\nu_2)} \int_{\Gamma \backslash \Hb \times \Gamma \backslash \Hb} \rho_{\Gamma \backslash \Hb}(z,w) \, d\pi(z,w),\]
where $\Pi(\nu_1,\nu_2)$ denotes the set of Borel probability measures $\pi$ on $\Gamma \backslash \Hb \times \Gamma \backslash \Hb$ with marginals $\nu_1$ and $\nu_2$, so that $\pi(B \times \Gamma \backslash \Hb) = \nu_1(B)$ and $\pi(\Gamma \backslash \Hb \times B) = \nu_2(B)$ for every Borel set $B \subseteq \Gamma \backslash \Hb$.

There is a dual formulation of the $1$-Wasserstein distance in terms of Lipschitz functions. An $L$-Lipschitz function on $\Gamma \backslash \Hb$ for some $L \geq 0$ is a function $F : \Hb \to \R$ satisfying $F(\gamma z) = F(z)$ for all $\gamma \in \Gamma$ and $z \in \Hb$ and
\[|F(z) - F(w)| \leq L \rho(z,w)\]
for all $z,w \in \Hb$; equivalently, we may view $F$ as a function on $\Gamma \backslash \Hb$ that satisfies
\[|F(z) - F(w)| \leq L \rho_{\Gamma \backslash \Hb}(z,w)\]
for all $z,w \in \Gamma \backslash \Hb$. Let $\Lip_1(\Gamma \backslash \Hb)$ denote the space of all $1$-Lipschitz functions on $\Gamma \backslash \Hb$. Via the Kantorovich--Rubinstein duality theorem \cite[Theorem 1.3]{Vil03}, we have that
\begin{equation}
\label{eqn:KRduality}
\Wscr_1(\nu_1,\nu_2) = \sup_{F \in \Lip_1(\Gamma \backslash \Hb)} \left|\int_{\Gamma \backslash \Hb} F(z) \, d\nu_1(z) - \int_{\Gamma \backslash \Hb} F(z) \, d\nu_2(z)\right|.
\end{equation}
This dual formulation turns out to be more useful for our purposes.

\subsubsection{The Cocompact Case}

We recall that when $\Gamma$ is cocompact, $L^2(\Gamma \backslash \Hb)$ has an orthonormal basis consisting of the constant function $\mu(\Gamma \backslash \Hb)^{-1/2}$ and of a countably infinite collection $\BB$ of nonconstant Maa\ss{} cusp forms. Each Maa\ss{} cusp form $f$ is $L^2$-normalised, so that $\int_{\Gamma \backslash \Hb} |f(z)|^2 \, d\mu(z) = 1$, and is a Laplacian eigenfunction with Laplacian eigenvalue $\lambda_f = \frac{1}{4} + t_f^2$, where $t_f \in \R \cup i(-\frac{1}{2},\frac{1}{2})$ denotes the spectral parameter of $f$. We shall control the size of $\Wscr_1(\nu_1,\nu_2)$ in terms of a weighted average of the Weyl sums $\int_{\Gamma \backslash \Hb} f(z) \, d\nu_j(z)$.

\begin{theorem}
\label{thm:compact}
Let $\Gamma$ be a cocompact lattice in $\SL_2(\R)$. Let $\nu_1,\nu_2$ be Borel probability measures on $\Gamma \backslash \Hb$. Then for all $T \geq 1$,
\begin{equation}
\label{eqn:compact}
\Wscr_1(\nu_1,\nu_2) \ll \frac{1}{T} + \mu(\Gamma \backslash \Hb)^{\frac{1}{2}} \left(\sum_{f \in \BB} \frac{e^{-\frac{t_f^2}{T^2}}}{\frac{1}{4} + t_f^2} \left|\int_{\Gamma \backslash \Hb} f(z) \, d\nu_1(z) - \int_{\Gamma \backslash \Hb} f(z) \, d\nu_2(z)\right|^2 \right)^{\frac{1}{2}}.
\end{equation}
\end{theorem}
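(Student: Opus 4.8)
The plan is to combine the Kantorovich--Rubinstein duality \eqref{eqn:KRduality} with a spectral (heat-kernel) smoothing of the competing Lipschitz function. By \eqref{eqn:KRduality} it suffices to bound $\left|\int_{\Gamma \backslash \Hb} F \, d\nu_1 - \int_{\Gamma \backslash \Hb} F \, d\nu_2\right|$ uniformly over $F \in \Lip_1(\Gamma \backslash \Hb)$. Let $\Delta$ denote the hyperbolic Laplacian, so that each element of the orthonormal basis $\{\mu(\Gamma \backslash \Hb)^{-1/2}\} \cup \BB$ is an eigenfunction with eigenvalue $\lambda_f = \frac14 + t_f^2 \geq 0$, and let $P_s \coloneqq e^{-s\Delta}$ be the associated heat semigroup, with heat kernel $K_s(z,w) = \sum_f e^{-\lambda_f s} f(z) \overline{f(w)}$; thus $P_s F = \sum_f e^{-\lambda_f s} \langle F, f \rangle f$, where $\langle F, f \rangle \coloneqq \int_{\Gamma \backslash \Hb} F \overline{f} \, d\mu$. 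I would take $s \coloneqq \frac{1}{2T^2}$, so that $s \leq \frac12$, and split
\[\left|\int F \, d\nu_1 - \int F \, d\nu_2\right| \leq \left|\int (F - P_s F) \, d\nu_1\right| + \left|\int (F - P_s F) \, d\nu_2\right| + \left|\int P_s F \, d\nu_1 - \int P_s F \, d\nu_2\right|.\]

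For the first two terms I would use the fact that $K_s$ is a probability kernel, $\int_{\Gamma \backslash \Hb} K_s(z,w) \, d\mu(w) = 1$ and $K_s \geq 0$. Since each $\nu_j$ is a probability measure, these terms are each at most $\|F - P_s F\|_\infty$, and for $F$ $1$-Lipschitz,
\[|F(z) - P_s F(z)| = \left|\int_{\Gamma \backslash \Hb} K_s(z,w)(F(z) - F(w)) \, d\mu(w)\right| \leq \int_{\Gamma \backslash \Hb} K_s(z,w) \rho_{\Gamma \backslash \Hb}(z,w) \, d\mu(w).\]
Unfolding via $K_s(z,w) = \sum_{\gamma \in \Gamma} K_s^{\Hb}(z,\gamma w)$ and bounding $\rho_{\Gamma \backslash \Hb} \leq \rho$ reduces this to the mean hyperbolic displacement $\int_{\Hb} K_s^{\Hb}(z,w) \rho(z,w) \, d\mu(w)$ of the heat kernel on $\Hb$. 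I expect this estimate, namely $\int_{\Hb} K_s^{\Hb}(z,w) \rho(z,w) \, d\mu(w) \ll \sqrt{s} \ll \frac1T$ for $0 < s \leq \frac12$, to be the main obstacle: it requires the explicit radial form of the hyperbolic heat kernel and the observation that for small times the Gaussian factor concentrates it at scale $\sqrt{s}$ before the negative curvature induces ballistic escape.

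For the main term, the constant basis function contributes identically to both integrals and therefore cancels, since $\int_{\Gamma \backslash \Hb} \mu(\Gamma \backslash \Hb)^{-1/2} \, d\nu_1 = \int_{\Gamma \backslash \Hb} \mu(\Gamma \backslash \Hb)^{-1/2} \, d\nu_2$; hence
\[\int P_s F \, d\nu_1 - \int P_s F \, d\nu_2 = \sum_{f \in \BB} e^{-\lambda_f s} \langle F, f \rangle \left(\int_{\Gamma \backslash \Hb} f \, d\nu_1 - \int_{\Gamma \backslash \Hb} f \, d\nu_2\right),\]
the interchange of summation and integration being justified by the uniform convergence of the spectral expansion of the smooth function $P_s F$. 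Writing $D_f$ for the Weyl-sum difference in parentheses and applying Cauchy--Schwarz to the pairing of $\lambda_f^{1/2} \langle F, f \rangle$ against $\lambda_f^{-1/2} e^{-\lambda_f s} D_f$ gives
\[\left|\int P_s F \, d\nu_1 - \int P_s F \, d\nu_2\right| \leq \left(\sum_{f \in \BB} \lambda_f |\langle F, f \rangle|^2\right)^{1/2} \left(\sum_{f \in \BB} \frac{e^{-2\lambda_f s}}{\lambda_f} |D_f|^2\right)^{1/2}.\]
The first factor is at most the Dirichlet energy $\langle \Delta F, F \rangle = \int_{\Gamma \backslash \Hb} |\nabla F|^2 \, d\mu \leq \mu(\Gamma \backslash \Hb)$, as a $1$-Lipschitz function lies in $H^1$ with $|\nabla F| \leq 1$ almost everywhere; the second factor is bounded by the sum in \eqref{eqn:compact} upon using $e^{-2\lambda_f s} = e^{-1/(4T^2)} e^{-t_f^2/T^2} \leq e^{-t_f^2/T^2}$ and $\lambda_f = \frac14 + t_f^2$. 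Combining the three terms and taking the supremum over $F \in \Lip_1(\Gamma \backslash \Hb)$ yields \eqref{eqn:compact}.
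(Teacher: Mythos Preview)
Your proof is correct and follows the same overall strategy as the paper: Kantorovich--Rubinstein duality, convolution with the heat kernel at time $s = 1/(2T^2)$ (which is precisely the paper's automorphic kernel $K$, since the paper's test function $h(t) = e^{-(t^2 + 1/4)/(2T^2)}$ gives spectral weight $e^{-\lambda_f s}$), the $O(1/T)$ mean-displacement estimate for the heat kernel on $\Hb$ (the paper's \eqref{eqn:karsinhint}, which is exactly the obstacle you flag), and Cauchy--Schwarz against the Dirichlet energy. The one substantive difference is how the bound $\sum_{f \in \BB} \lambda_f |\langle F, f\rangle|^2 \leq \mu(\Gamma \backslash \Hb)$ is justified: you apply it directly to the Lipschitz function $F$ via Rademacher's theorem and the spectral characterisation of $H^1$, whereas the paper first mollifies $F$ to a smooth $F_\varepsilon$ (\hyperref[lem:smoothing]{Lemma \ref*{lem:smoothing}}), establishes the pointwise gradient bound \eqref{eqn:nablaFesup}, applies Green's identity to the smooth $F_\varepsilon$, and only then lets $\varepsilon \to 0$. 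Your route is shorter in the cocompact setting and dispenses with the explicit mollification lemma; the paper's route keeps the Parseval/Green step entirely within the realm of smooth functions, which is more self-contained and transports without change to the noncocompact proof of \hyperref[thm:noncompact]{Theorem \ref*{thm:noncompact}}.
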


\begin{remark}
Note that $f \in \BB$ is $L^2$-normalised with respect to the measure $\mu$ on $\Gamma \backslash \Hb$, which in general need not be a probability measure. The presence of the normalisation factor $\mu(\Gamma \backslash \Hb)^{1/2}$ in the second term on the right-hand side of \eqref{eqn:compact} is therefore natural since the rescaled cusp forms $\mu(\Gamma \backslash \Hb)^{1/2} f$ are $L^2$-normalised with respect the probability measure $\mu(\Gamma \backslash \Hb)^{-1} \, \mu$ on $\Gamma \backslash \Hb$.
\end{remark}

\subsubsection{The Noncocompact Case}

When $\Gamma$ is cofinite yet noncocompact, $\Gamma \backslash \Hb$ has a finite yet nonempty collection of cusps $\af$. Associated to each cusp is an Eisenstein series $E_{\af}(z,s)$. The spectral decomposition of $L^2(\Gamma \backslash \Hb)$ then consists of a discrete spectrum consisting of the constant function $\mu(\Gamma \backslash \Hb)^{-1/2}$ and a countable collection $\BB$ of Maa\ss{} cusp forms as well as a continuous spectrum spanned by Eisenstein series $E_{\af}(z,\frac{1}{2} + it)$, where $t \in \R$, as $\af$ runs over the cusps of $\Gamma \backslash \Hb$.

In general, Eisenstein series need not be integrable with respect to a given Borel probability measure $\nu$ on $\Gamma \backslash \Hb$ due to the fact that $E_{\af}(z,\frac{1}{2} + it)$ is unbounded. To rectify this, we impose additional conditions on $\nu$. We recall that for each cusp $\af$ of $\Gamma \backslash \Hb$, there exists a scaling matrix $\sigma_{\af} \in \SL_2(\R)$ for which $\sigma_{\af} \infty = \af$ such that $\sigma_{\af} \begin{psmallmatrix} 1 & 1 \\ 0 & 1 \end{psmallmatrix} \sigma_{\af}^{-1}$ and $\begin{psmallmatrix} -1 & 0 \\ 0 & -1 \end{psmallmatrix}$ together generate the stabiliser $\Gamma_{\af} \coloneqq \{\gamma \in \Gamma : \gamma \af = \af\}$ of $\af$ with respect to $\Gamma$. For $Y \geq 1$, the \emph{cuspidal zone} is the set
\[\FF_{\af}(Y) \coloneqq \{z \in \Hb : 0 < \Re(\sigma_{\af}^{-1} z) < 1, \ \Im(\sigma_{\af}^{-1} z) > Y\}\]
(see \cite[Section 2.2]{Iwa02}).

\begin{definition}
Let $\Gamma$ be a cofinite noncocompact lattice in $\SL_2(\R)$. A finite Borel measure $\nu$ on $\Gamma \backslash \Hb$ is \emph{$Y^{-\alpha}$-cuspidally tight} for some $\alpha \geq 0$ if for every cuspidal zone $\FF_{\af}(Y)$ of $\Gamma \backslash \Hb$, we have that $\nu(\FF_{\af}(Y)) \ll_{\Gamma} Y^{-\alpha}$ as $Y$ tends to infinity.
\end{definition}

\begin{example}
The measure $d\mu(z) = y^{-2} \, dx \, dy$ on $\Gamma \backslash \Hb$ is $Y^{-\alpha}$-cuspidally tight for any $\alpha \leq 1$.
\end{example}

The $Y^{-\alpha}$-cuspidally tightness of a finite Borel measure $\nu$ ensures that $\int_{\Gamma \backslash \Hb} \Ht_{\Gamma}(z)^{\beta} \, d\nu(z)$ is finite for all $\beta \in [0,\alpha)$, where $\Ht_{\Gamma}(z) \coloneqq \max_{\gamma \in \Gamma} \Im(\gamma z)$. In particular, if $\nu$ is $Y^{-\alpha}$-cuspidally tight for some $\alpha > 0$, then every Lipschitz function on $\Gamma \backslash \Hb$ is $\nu$-integrable. Moreover, if $\nu$ is $Y^{-\alpha}$-cuspidally tight for some $\alpha > \frac{1}{2}$, then every Eisenstein series $E_{\af}(z,\frac{1}{2} + it)$ is $\nu$-integrable.

With this definition in hand, we may now state the analogue of \hyperref[thm:compact]{Theorem \ref*{thm:compact}} in the setting of cofinite noncocompact lattices.

\begin{theorem}
\label{thm:noncompact}
Let $\Gamma$ be a cofinite noncocompact lattice in $\SL_2(\R)$. Let $\nu_1,\nu_2$ be Borel probability measures on $\Gamma \backslash \Hb$ that are both $Y^{-1/2 - \delta}$-cuspidally tight for some $\delta > 0$. Then for all $T \geq 1$,
\begin{multline}
\label{eqn:noncompact}
\Wscr_1(\nu_1,\nu_2) \ll \frac{1}{T} + \mu(\Gamma \backslash \Hb)^{\frac{1}{2}} \left(\sum_{f \in \BB} \frac{e^{-\frac{t_f^2}{T^2}}}{\frac{1}{4} + t_f^2} \left|\int_{\Gamma \backslash \Hb} f(z) \, d\nu_1(z) - \int_{\Gamma \backslash \Hb} f(z) \, d\nu_2(z)\right|^2 \right.	\\
\left. + \sum_{\af} \frac{1}{4\pi} \int_{-\infty}^{\infty} \frac{e^{-\frac{t^2}{T^2}}}{\frac{1}{4} + t^2} \left|\int_{\Gamma \backslash \Hb} E_{\af}\left(z,\frac{1}{2} + it\right) \, d\nu_1(z) - \int_{\Gamma \backslash \Hb} E_{\af}\left(z,\frac{1}{2} + it\right) \, d\nu_2(z)\right|^2 \, dt\right)^{\frac{1}{2}}.
\end{multline}
\end{theorem}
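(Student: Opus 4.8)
The plan is to start from the Kantorovich--Rubinstein duality \eqref{eqn:KRduality}, which reduces matters to a bound for $|\int_{\Gamma \backslash \Hb} F \, d\nu_1 - \int_{\Gamma \backslash \Hb} F \, d\nu_2|$ that is \emph{uniform} over all $F \in \Lip_1(\Gamma \backslash \Hb)$; the two terms on the right-hand side of \eqref{eqn:noncompact} will then come, respectively, from a short-distance smoothing error and from a spectral expansion of the smoothed function. For the smoothing I would use the heat kernel: let $k_s(z,w)$ be the heat kernel on $\Hb$, let $K_s(z,w) \coloneqq \sum_{\gamma \in \Gamma} k_s(z,\gamma w)$ be its automorphic average, and set $F_s(z) \coloneqq \int_{\Gamma \backslash \Hb} K_s(z,w) F(w) \, d\mu(w)$, so that $F_s$ is the image of $F$ under the heat operator $e^{s\Delta}$, which acts on the $\lambda_f$-eigenspace as multiplication by $e^{-\lambda_f s}$. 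I will take $s = \frac{1}{2T^2}$.

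First I would bound the smoothing error. Unfolding the automorphic average and viewing $F$ as a $\Gamma$-invariant function on $\Hb$, we have $F(z) - F_s(z) = \int_{\Hb} k_s(z,w)(F(z) - F(w)) \, d\mu(w)$, so the $1$-Lipschitz bound $|F(z) - F(w)| \leq \rho(z,w)$ together with the radial symmetry of $k_s$ gives $\|F - F_s\|_\infty \leq \int_{\Hb} k_s(z,w) \rho(z,w) \, d\mu(w) \ll \sqrt{s} \ll \frac{1}{T}$, uniformly in $z$; here the first moment of the hyperbolic heat kernel is $\ll \sqrt{s}$ for $0 < s \leq 1$, e.g.\ by Cauchy--Schwarz against the second moment, which is $\ll s$. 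Since $\nu_1, \nu_2$ are probability measures, this yields $|\int (F - F_s) \, d(\nu_1 - \nu_2)| \leq 2\|F - F_s\|_\infty \ll \frac{1}{T}$, uniformly over $F \in \Lip_1(\Gamma \backslash \Hb)$, which accounts for the first term in \eqref{eqn:noncompact}.

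Next I would expand $F_s$ spectrally. A $1$-Lipschitz function grows at most logarithmically in the cusps, so $F \in L^2(\Gamma \backslash \Hb)$, and applying $e^{-\lambda_f s}$ (respectively $e^{-\lambda(t)s}$, with $\lambda(t) \coloneqq \frac{1}{4} + t^2$, on the continuous spectrum) to its spectral expansion represents $F_s$. Writing $\langle \cdot, \cdot \rangle$ for the $L^2(\Gamma \backslash \Hb, \mu)$ inner product and abbreviating the Weyl-sum differences by $W_f \coloneqq \int_{\Gamma \backslash \Hb} f \, d(\nu_1 - \nu_2)$ and $W_{\af}(t) \coloneqq \int_{\Gamma \backslash \Hb} E_{\af}(\cdot, \frac{1}{2} + it) \, d(\nu_1 - \nu_2)$, integration against $d(\nu_1 - \nu_2)$ annihilates the constant eigenfunction (as $\nu_1, \nu_2$ are probability measures) and leaves a discrete sum of terms $e^{-\lambda_f s} \langle F, f \rangle W_f$ together with the analogous continuous integral. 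I would then apply Cauchy--Schwarz to this combined sum, splitting each term as the product of $\lambda_f^{1/2} \langle F, f \rangle$ and $\lambda_f^{-1/2} e^{-\lambda_f s} W_f$ (and similarly on the continuous spectrum). The first factors assemble into the Dirichlet energy $\sum_{f \in \BB} \lambda_f |\langle F, f \rangle|^2 + \sum_{\af} \frac{1}{4\pi} \int_{-\infty}^{\infty} \lambda(t) |\langle F, E_{\af}(\cdot, \frac{1}{2} + it) \rangle|^2 \, dt = \int_{\Gamma \backslash \Hb} |\nabla F|^2 \, d\mu \leq \mu(\Gamma \backslash \Hb)$, the final inequality holding because $F$ is $1$-Lipschitz, so $|\nabla F| \leq 1$ almost everywhere. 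The second factors assemble, via $e^{-2\lambda_f s} = e^{-1/(4T^2)} e^{-t_f^2/T^2} \leq e^{-t_f^2/T^2}$ for $s = \frac{1}{2T^2}$, into exactly the spectral quantity on the right-hand side of \eqref{eqn:noncompact}. As all of these bounds are uniform over $F \in \Lip_1(\Gamma \backslash \Hb)$, taking the supremum via \eqref{eqn:KRduality} finishes the argument.

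The main obstacle is rigorously justifying the termwise integration of the spectral expansion of $F_s$ against $\nu_1, \nu_2$ in the noncompact setting. The cusp-form contributions are unproblematic, since Maa\ss{} cusp forms are bounded and decay rapidly in the cusps, but the Eisenstein series $E_{\af}(z, \frac{1}{2} + it)$ grow like $\Im(\sigma_{\af}^{-1} z)^{1/2}$ there, so the individual Weyl sums $W_{\af}(t)$ and the continuous-spectrum integral need not even converge for a general finite measure. This is precisely where the hypothesis that $\nu_1, \nu_2$ are $Y^{-1/2 - \delta}$-cuspidally tight enters: as noted after the definition, $\alpha = \frac{1}{2} + \delta > \frac{1}{2}$ guarantees $\int_{\Gamma \backslash \Hb} \Ht_{\Gamma}^{\beta} \, d\nu_j < \infty$ for $\beta \in [\frac{1}{2}, \alpha)$ and hence that each $E_{\af}(\cdot, \frac{1}{2} + it)$ is $\nu_j$-integrable, so $W_{\af}(t)$ is well defined. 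To promote this to absolute convergence of the full expansion and to legitimise interchanging integration with the spectral sum and the $t$-integral, I would combine the rapid spectral decay $e^{-\lambda_f s}$ and $e^{-\lambda(t)s}$ from the heat smoothing with standard polynomial bounds (the Weyl law together with sup-norm and growth estimates for $f$ and $E_{\af}(\cdot, \frac{1}{2} + it)$ in the spectral parameter), using a truncation of the Eisenstein series at height $Y$ and the cuspidal-tightness decay to control the resulting tail. Carrying out this interchange uniformly in $T$ is the technical heart of the noncompact case and is what distinguishes it from \hyperref[thm:compact]{Theorem \ref*{thm:compact}}.
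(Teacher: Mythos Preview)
Your outline is correct and follows the same overall architecture as the paper: Kantorovich--Rubinstein duality, heat-type smoothing with spectral weight $e^{-\lambda s}$ at $s=\tfrac{1}{2T^2}$, an $O(1/T)$ bound on $\|F-F_s\|_\infty$ via the first radial moment of the kernel, and a Cauchy--Schwarz split of the spectral expansion into a Dirichlet-energy factor and the weighted Weyl-sum factor. Your identification of the cuspidal-tightness hypothesis as precisely what makes the continuous-spectrum Weyl sums $W_{\af}(t)$ well defined, and the heat damping as what makes the interchange absolutely convergent, matches the paper's use of the local Weyl law for the same purpose.

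The one genuine difference is in how the Dirichlet-energy factor is controlled. You go straight to $\sum_f \lambda_f |\langle F,f\rangle|^2 + \text{(cont.)} = \int_{\Gamma\backslash\Hb}|\nabla F|^2\,d\mu \le \mu(\Gamma\backslash\Hb)$, invoking Rademacher's theorem and the spectral representation of the Dirichlet form on $H^1$. The paper does not do this directly: it first replaces $F$ by a smooth approximation $F_\e$ (its Lemma~\ref{lem:smoothing}, built from a compactly supported radial bump) with $\|F-F_\e\|_\infty\le\e$ and an explicit pointwise bound $\Im(z)^2|\partial_z F_\e|^2\le(e^\e-\tfrac12)^2$, so that the chain $\langle\Delta F_\e,F_\e\rangle = \int|\nabla F_\e|^2$ is elementary (Green's identity for smooth functions with controlled growth), and then lets $\e\to 0$. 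Your route is shorter but relies on knowing that the form domain of the self-adjoint Laplacian on a complete finite-area hyperbolic surface is exactly $H^1$ and that the quadratic-form spectral identity holds there; the paper's extra mollification buys a fully elementary proof that avoids this appeal. Either approach is valid.
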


\subsection{Arithmetic Applications}
\label{sec:arithmetic}

We now state some arithmetic applications of \hyperref[thm:compact]{Theorems \ref*{thm:compact}} and \ref{thm:noncompact}. We take $\Gamma$ to be the modular group $\SL_2(\Z)$, so that $\Gamma \backslash \Hb$ is of finite area but is noncompact and has a single cusp, namely $\af = \infty$. We let
\[\nu \coloneqq \frac{1}{\mu(\Gamma \backslash \Hb)} \mu\]
denote the probability Haar measure on $\Gamma \backslash \Hb$.\footnote{Our results below also hold more generally when $\Gamma$ is the Hecke congruence group $\Gamma_0(q)$ consisting of matrices in $\SL_2(\Z)$ whose lower left entry is a multiple of a fixed positive integer $q$. Similarly, they also hold when $\Gamma$ is a cocompact lattice arising as the image of the group of norm one units of an Eichler order in a quaternion division algebra.}

\subsubsection{Duke's Theorem on the Equidistribution of Heegner Points and of Closed Geodesics}

Let $D < 0$ be a fundamental discriminant. Each ideal class in the class group of the imaginary quadratic field $\Q(\sqrt{D})$ is associated to a $\Gamma$-orbit of primitive irreducible integral binary quadratic forms $Q(x,y) = ax^2 + bxy + cy^2$ of discriminant $b^2 - 4ac = D$. In turn, such a $\Gamma$-orbit is associated to a $\Gamma$-orbit of points $(-b + \sqrt{D})/2a$ in the upper half-plane $\Hb$, or equivalently a single Heegner point on the modular surface $\Gamma \backslash \Hb$. We denote by $\Lambda_D$ the set of Heegner points of discriminant $D$ on $\Gamma \backslash \Hb$ and we define the Borel probability measure $\nu_D$ given on Borel sets $B \subseteq \Gamma \backslash \Hb$ by
\begin{equation}
\label{eqn:nuDneq}
\nu_D(B) \coloneqq \frac{\#\ (\Lambda_D \cap B)}{\# \Lambda_D}.
\end{equation}

Similarly, let $D > 0$ be a positive fundamental discriminant. Each narrow ideal class in the narrow class group of the real quadratic field $\Q(\sqrt{D})$ is associated to a $\Gamma$-orbit of primitive irreducible integral binary quadratic forms $Q(x,y) = ax^2 + bxy + cy^2$ of discriminant $b^2 - 4ac = D$. In turn, such a $\Gamma$-orbit is associated to a $\Gamma$-orbit of closed geodesics in the upper half-plane that intersect the real line at $(-b \pm \sqrt{D})/2a$, or equivalently a single closed geodesic $\CC \subset \Gamma \backslash \Hb$. We again let $\Lambda_D$ denote the set of closed geodesics of discriminant $D$ on $\Gamma \backslash \Hb$ and we define the Borel probability measure $\nu_D$ via
\begin{equation}
\label{eqn:nuDpos}
\nu_D(B) \coloneqq \frac{\sum_{\CC \in \Lambda_D} \ell(\CC \cap B)}{\sum_{\CC \in \Lambda_D} \ell(\CC)},
\end{equation}
where $\ell$ denotes the hyperbolic length.

Duke proved the following equidistribution theorem for these measures.

\begin{theorem}[{Duke \cite{Duk88} (see also \cite{ELMV12})}]
As $D$ tends to negative infinity along negative fundamental discriminants, the sequence of Borel probability measures $\nu_D$ associated to Heegner points of discriminant $D$ via \eqref{eqn:nuDneq} equidistributes on the modular surface with respect to the probability Haar measure $\nu$.

Similarly, as $D$ tends to infinity along positive fundamental discriminants, the sequence of Borel probability measures $\nu_D$ associated to closed geodesics of discriminant $D$ via \eqref{eqn:nuDpos} equidistributes on the modular surface with respect to the probability Haar measure $\nu$.
\end{theorem}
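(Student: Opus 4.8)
The plan is to verify equidistribution through the spectral (Weyl) criterion adapted to $\Gamma \backslash \Hb$, reducing everything to the decay of the Weyl sums $\int_{\Gamma \backslash \Hb} f \, d\nu_D$ as $|D| \to \infty$. Since the constant function, the Maa\ss{} cusp forms $f \in \BB$, and the Eisenstein series $E_\infty(z, \tfrac{1}{2}+it)$ span $L^2(\Gamma \backslash \Hb)$, and since a smooth compactly supported test function admits a spectral expansion converging in a strong enough topology, it suffices to show that for each fixed $f \in \BB$ one has $\int f \, d\nu_D \to 0$ and that the analogous Eisenstein periods tend to $0$; the constant-function component automatically matches the total mass $1$ of $\nu_D$ and $\nu$. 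First I would check that the $\nu_D$ are uniformly cuspidally tight—Heegner points and the geodesic cycles of discriminant $D$ place a controlled proportion of their mass high in the cusp—so that the Eisenstein periods make sense and the reduction is legitimate; this is also where Theorem~\ref{thm:noncompact} would apply to upgrade qualitative equidistribution to a quantitative Wasserstein bound.

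The heart of the argument is to identify each Weyl period arithmetically. For $D < 0$, the sum $\sum_{z \in \Lambda_D} f(z)$ over Heegner points is, via the Shimura correspondence and the Katok--Sarnak formula, essentially the $|D|$-th Fourier coefficient of the half-integral weight form associated to $f$; for $D > 0$, the corresponding geodesic-cycle integral $\sum_{\CC \in \Lambda_D} \int_\CC f \, ds$ plays the same role. In both cases Waldspurger's formula expresses the square of this period as a central $L$-value $L(\tfrac{1}{2}, f \otimes \chi_D)$ times explicit archimedean and arithmetic factors. Dividing by the normalising mass $\#\Lambda_D = h(D)$ (respectively the total geodesic length), which by the class number formula is $|D|^{1/2 + o(1)}$, the normalised Weyl sum becomes, schematically, $|D|^{-1/4 + o(1)} L(\tfrac{1}{2}, f \otimes \chi_D)^{1/2}$.

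The main obstacle, and the genuinely deep arithmetic input, is a nontrivial bound for this central $L$-value in the discriminant aspect. The convexity bound $L(\tfrac{1}{2}, f \otimes \chi_D) \ll_{f,\e} |D|^{1/2 + \e}$ only yields $\ll |D|^{\e}$ for the normalised Weyl sum, which is insufficient; one needs a subconvex bound $L(\tfrac{1}{2}, f \otimes \chi_D) \ll_f |D|^{1/2 - \delta}$ for some $\delta > 0$, equivalently Duke's nontrivial estimate for Fourier coefficients of half-integral weight cusp forms, to gain the power saving $|D|^{-\delta/2}$ that forces the Weyl sums to zero. This is precisely the content of Duke's method (via the circle/delta method together with Kuznetsov-type estimates), and it is the step I expect to be hardest to reproduce from scratch.

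Finally, the Eisenstein contribution is handled in parallel but more easily: the period of $E_\infty(z, \tfrac{1}{2}+it)$ against $\nu_D$ unfolds to a product of Dirichlet $L$-functions $\zeta(\tfrac{1}{2}+it) L(\tfrac{1}{2}+it, \chi_D)$ (up to gamma factors and the class number normalisation), and subconvexity for $L(\tfrac{1}{2}+it, \chi_D)$ in the conductor aspect, available already from the Burgess bound, supplies the needed decay uniformly in $t$, with the Gaussian weight $e^{-t^2/T^2}$ of Theorem~\ref{thm:noncompact} controlling the $t$-integral. Assembling the cuspidal and continuous contributions and letting $|D| \to \infty$ then gives $\int F \, d\nu_D \to \int F \, d\nu$ for all test functions $F$, which is the asserted equidistribution; feeding the same $L$-value bounds into Theorem~\ref{thm:noncompact} yields an explicit rate in the Wasserstein metric.
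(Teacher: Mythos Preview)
The paper does not itself prove this theorem; it is quoted as Duke's result and serves as background for the quantitative Theorem~\ref{thm:Duke}. Your sketch is a correct outline of the standard argument---spectral reduction to Weyl sums, identification with central $L$-values via Waldspurger/Katok--Sarnak, and subconvexity in the $D$-aspect---and is essentially Duke's own strategy.

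Where the paper does supply a proof is for the quantitative bound $\Wscr_1(\nu_D,\nu) \ll_{\e} |D|^{-1/12+\e}$, and there the method differs from your proposal in one substantive respect. You fix $f$ and appeal to a pointwise subconvex bound $L(\tfrac12, f\otimes\chi_D) \ll_f |D|^{1/2-\delta}$; to get a Wasserstein bound via Theorem~\ref{thm:noncompact} one must instead sum over the whole spectrum with $t_f$ up to a growing truncation $T$, so uniformity of the subconvex exponent in $t_f$ becomes an issue. The paper sidesteps this by applying H\"older with exponents $(\tfrac13,\tfrac12,\tfrac16)$ to the spectral sum and invoking the cubic moment bound $\sum_{t_f\le U} L(\tfrac12,f\otimes\chi_D)^3/L(1,\ad f) \ll_{\e} (|D|U)^{2+\e}$ of Conrey--Iwaniec/Young/Andersen--Wu, together with Siegel's lower bound on $L(1,\chi_D)$. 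This moment-based route delivers the explicit exponent $1/12$ directly, whereas your fixed-$f$ approach would recover qualitative equidistribution but would need an additional argument (hybrid subconvexity or a separate moment estimate) to produce a uniform rate.
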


Note that the measures $\nu_D$ are $Y^{-\alpha}$-cuspidally tight for any $\alpha \geq 0$ by the compactness of $\Lambda_D$. We may therefore apply \hyperref[thm:noncompact]{Theorem \ref*{thm:noncompact}} in order to prove upper bounds for the $1$-Wasserstein distances in these equidistribution problems.

\begin{theorem}
\label{thm:Duke}
Let $D$ be a fundamental discriminant and let $\nu_D$ denote the Borel probability measure on the modular surface $\Gamma \backslash \Hb$ associated to Heegner points of discriminant $D$ via \eqref{eqn:nuDneq} if $D < 0$ and associated to closed geodesics of discriminant $D$ via \eqref{eqn:nuDpos} if $D > 0$. Then
\begin{equation}
\label{eqn:W1Duke}
\Wscr_1(\nu_D,\nu) \ll_{\e} |D|^{-\frac{1}{12} + \e}.
\end{equation}
Assuming the generalised Lindel\"{o}f hypothesis, we have the stronger bound
\begin{equation}
\label{eqn:W1DukeGLH}
\Wscr_1(\nu_D,\nu) \ll_{\e} |D|^{-\frac{1}{4} + \e}.
\end{equation}
\end{theorem}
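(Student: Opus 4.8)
The plan is to apply \hyperref[thm:noncompact]{Theorem \ref*{thm:noncompact}} with $\nu_1 = \nu_D$ and $\nu_2 = \nu$, and then to bound the spectral sum on the right-hand side of \eqref{eqn:noncompact} using known estimates for Weyl sums over Heegner points or closed geodesics. Since $\nu$ is the probability Haar measure, we first observe that the Weyl sums against $\nu$ vanish for every nonconstant Maa\ss{} cusp form $f$ and for every Eisenstein series $E_{\af}(z,\tfrac12 + it)$ (the latter by an unfolding/regularisation argument, since $\int_{\Gamma \backslash \Hb} E_{\af}(z,s)\, d\mu(z)$ picks out only the residual spectrum, which is constant). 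Thus the differences $\int F\, d\nu_D - \int F\, d\nu$ reduce to the bare Weyl sums $W_f(D) \coloneqq \int_{\Gamma \backslash \Hb} f(z)\, d\nu_D(z)$ and $W_{\af,t}(D) \coloneqq \int_{\Gamma \backslash \Hb} E_{\af}(z, \tfrac12 + it)\, d\nu_D(z)$, and the inequality becomes
\begin{equation*}
\Wscr_1(\nu_D,\nu) \ll \frac{1}{T} + \left(\sum_{f \in \BB} \frac{e^{-t_f^2/T^2}}{\tfrac14 + t_f^2} |W_f(D)|^2 + \frac{1}{4\pi}\int_{-\infty}^{\infty} \frac{e^{-t^2/T^2}}{\tfrac14 + t^2} |W_{\af,t}(D)|^2\, dt\right)^{1/2}.
\end{equation*}

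The key input is Duke's bound relating these Weyl sums to central values of $L$-functions. Via the Waldspurger/Zhang-type formula (or Katok--Sarnak for the Maa\ss{} case, and the classical class number / Hurwitz formula for Eisenstein series), one has $|W_f(D)|^2 \ll_{t_f,\e} |D|^{-1/2 + \e}\, L(\tfrac12, f \times \chi_D)$ up to archimedean factors, and similarly for the Eisenstein contribution where the relevant object is $|\zeta(\tfrac12 + it)|^2 |L(\tfrac12 + it, \chi_D)|^2 / |\zeta(1 + 2it)|^2$ times $|D|^{-1/2+\e}$. The unconditional subconvexity bound of Duke--Iwaniec gives a saving of the shape $|D|^{-1/2 - \eta}$ in $|W_f(D)|^2$ for some fixed $\eta > 0$; tracking the admissible exponent through the convexity-breaking subconvexity estimate yields the gain that ultimately produces the exponent $1/12$ after optimising in $T$. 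Under the generalised Lindel\"{o}f hypothesis, each $L$-value is $\ll_{t_f,\e} |D|^{\e}$, so $|W_f(D)|^2 \ll |D|^{-1/2+\e}$, and the exponent improves to $1/4$ after optimisation.

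The main technical obstacle is controlling the $t$ and $t_f$ dependence of the archimedean factors so that the spectral sum converges and so that the truncation parameter $T$ can be chosen to balance the two terms. The Gaussian weight $e^{-t_f^2/T^2}$ restricts the effective range to $|t_f|, |t| \ll T$, but the archimedean transfer factors in the Waldspurger/Katok--Sarnak formula grow polynomially (indeed exponentially in imaginary directions, though the discrete spectrum has $t_f$ real for $\Gamma = \SL_2(\Z)$ by Selberg), so I would need to invoke a uniform bound of the form $|W_f(D)|^2 \ll (1 + |t_f|)^{A} |D|^{-1/2+\e} L(\tfrac12, f\times\chi_D)$ with explicit $A$, combine it with a spectral large sieve or an average subconvexity bound over the family to handle the sum over $f$ with $|t_f| \ll T$, and likewise bound the Eisenstein integral using the convexity or subconvexity estimate for $L(\tfrac12 + it, \chi_D)$ uniformly in $t \ll T$. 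Having assembled a bound of the form (spectral sum) $\ll |D|^{-1/2 - \eta + \e}\, T^{B}$ for suitable $\eta, B$, the proof concludes by choosing $T$ as a small power of $|D|$ to optimise the trade-off between $1/T$ and the square root of this quantity, yielding \eqref{eqn:W1Duke} unconditionally and \eqref{eqn:W1DukeGLH} under the generalised Lindel\"{o}f hypothesis.
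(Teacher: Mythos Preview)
Your overall architecture is correct and matches the paper: apply \hyperref[thm:noncompact]{Theorem \ref*{thm:noncompact}} with $\nu_1=\nu_D$, $\nu_2=\nu$, reduce to the bare Weyl sums, convert these via a Waldspurger-type formula into $L$-values, and then bound the resulting spectral sum. For the conditional bound \eqref{eqn:W1DukeGLH} your sketch is essentially the paper's proof: under GLH each central value is $O_{\e}((1+|t_f|)^{\e}|D|^{\e})$, and the weighted Weyl law closes the argument with $T=|D|^{1/4}$.

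There are, however, two concrete gaps in the unconditional part. First, the period formula you write down is missing a factor: one has
\[
|W_f(D)|^2 \;=\; \frac{H_{\sgn(D)}(t_f)}{8\sqrt{|D|}\,L(1,\chi_D)^2}\,\frac{L\!\left(\tfrac12,f\right)\,L\!\left(\tfrac12,f\otimes\chi_D\right)}{L(1,\ad f)},
\]
so the extra central value $L(\tfrac12,f)$ must also be controlled in the spectral average (and Siegel's bound $L(1,\chi_D)\gg_{\e}|D|^{-\e}$ is needed). Second, and more seriously, the exponent $1/12$ does \emph{not} fall out of pointwise Duke--Iwaniec-type subconvexity. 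If you insert the best available individual bounds (say $L(\tfrac12,f)\ll t_f^{1/3+\e}$ and the hybrid Weyl bound $L(\tfrac12,f\otimes\chi_D)\ll(|D|t_f)^{1/3+\e}$) and then sum with the Weyl law, the spectral sum grows like $|D|^{-1/6+\e}T^{2/3+\e}$; balancing against $1/T$ gives only $\Wscr_1(\nu_D,\nu)\ll_{\e}|D|^{-1/16+\e}$. The paper obtains $1/12$ by a different mechanism: it applies H\"older's inequality with exponents $(\tfrac13,\tfrac12,\tfrac16)$ to the spectral sum and feeds in three moment bounds over $t_f\le U$, namely the weighted Weyl law $\sum 1/L(1,\ad f)\ll U^2$, the second moment $\sum L(\tfrac12,f)^2/L(1,\ad f)\ll U^{2+\e}$, and crucially the Conrey--Iwaniec/Young/Andersen--Wu \emph{cubic} moment $\sum L(\tfrac12,f\otimes\chi_D)^3/L(1,\ad f)$. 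This combination makes the dyadic contribution essentially independent of $U$ and produces the spectral sum bound $\ll_{\e}|D|^{-1/6+\e}$ uniformly in $T$, after which $T=|D|^{1/12}$ gives \eqref{eqn:W1Duke}. Your proposal gestures at ``an average subconvexity bound over the family'' but does not isolate the cubic moment or the H\"older splitting, and without that specific input the asserted exponent $1/12$ is not attained.
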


\subsubsection{Mass Equidistribution of Hecke--Maa\ss{} Cusp Forms}

Let $g \in \BB$ be a Hecke--Maa\ss{} cusp form, namely a Maa\ss{} cusp form that is a joint eigenfunction of the Hecke operators $T_n$ for all positive integers $n$. Let $\nu_g$ denote the Borel probability measure given on Borel sets $B \subseteq \Gamma \backslash \Hb$ by
\begin{equation}
\label{eqn:nugdefeq}
\nu_g(B) \coloneqq \int_{B} |g(z)|^2 \, d\mu(z).
\end{equation}
The quantum unique ergodicity conjecture of Rudnick and Sarnak for $\Gamma \backslash \Hb$ predicts (in a stronger form, involving microlocal lifts) that as one traverses a sequence of Hecke--Maa\ss{} cusp forms of increasing spectral parameter $t_g$, the probability measures $\nu_g$ equidistribute on $\Gamma \backslash \Hb$ with respect to $\nu$ \cite[Conjecture]{RS94}. This conjecture was proven by Lindenstrauss \cite[Theorem 1.4]{Lin06} with additional input from Soundararajan \cite{Sou10b}.

\begin{theorem}[Lindenstrauss--Soundararajan {\cite{Lin06,Sou10b}}]
\label{thm:ineffectiveMaassQUE}
Let $(g)$ be a sequence of Hecke--Maa\ss{} cusp forms on $\Gamma \backslash \Hb$ with increasing spectral parameter $t_g$. As $t_g$ tends to infinity, the sequence of Borel probability measures $\nu_g$ associated to $|g|^2$ via \eqref{eqn:nugdefeq} equidistributes on the modular surface with respect to the probability Haar measure $\nu$.
\end{theorem}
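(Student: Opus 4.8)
The plan is to reduce the equidistribution assertion to the vanishing of individual Weyl sums and then to describe the two complementary mechanisms that control them. By the Portmanteau characterisation recalled in the introduction, it suffices to prove that $\int_{\Gamma\backslash\Hb}\psi\,d\nu_g \to \int_{\Gamma\backslash\Hb}\psi\,d\nu$ as $t_g\to\infty$ for every $\psi$ in a determining subspace of the bounded continuous functions. Using the spectral decomposition of $L^2(\Gamma\backslash\Hb)$, I would take this subspace to be spanned by the constant function, the Maa\ss{} cusp forms $f\in\BB$, and the incomplete Eisenstein series, the latter being bounded and tailored so that the behaviour near the cusp is controlled. The constant function contributes the common main term on both sides, since $g$ is $L^2$-normalised and hence $\nu_g$ is a probability measure; a cusp form $f$ has mean zero, so $\int f\,d\nu=0$. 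The whole statement therefore reduces to showing that $\int_{\Gamma\backslash\Hb} f(z)|g(z)|^2\,d\mu(z)\to 0$ for each fixed $f\in\BB$, together with the analogous vanishing for each incomplete Eisenstein series.

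These are precisely the Weyl sums appearing in \ref{thm:noncompact} with $\nu_1=\nu_g$ and $\nu_2=\nu$, and each $\nu_g$ is cuspidally tight because $|g|^2$ decays in the cusp; for a quantitative form one would let $T\to\infty$ in tandem with $t_g$, but for the present ineffective statement it is enough to kill each fixed term. For the cuspidal Weyl sums I would invoke Watson's triple product formula, which expresses $\left|\int_{\Gamma\backslash\Hb} f(z)|g(z)|^2\,d\mu(z)\right|^2$ as a ratio of completed $L$-functions with $\Lambda(1/2,f\times g\times g)=\Lambda(1/2,f)\,\Lambda(1/2,f\times\mathrm{sym}^2 g)$ in the numerator and $\Lambda(1,\mathrm{sym}^2 f)\,\Lambda(1,\mathrm{sym}^2 g)^2$ in the denominator. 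Under this identification, decay of the Weyl sum as $t_g\to\infty$ is equivalent to subconvexity for this central value in the eigenvalue aspect. Since the convexity bound does not suffice, the unconditional route instead passes through the microlocal lift of $|g|^2$ to the unit tangent bundle $\Gamma\backslash\SL_2(\R)$: any weak-$\ast$ limit of the lifted measures is invariant under the diagonal geodesic flow, the Hecke symmetry forces positive entropy, and Lindenstrauss's classification of flow-invariant Hecke-recurrent measures then identifies every weak-$\ast$ limit as a multiple $c\cdot\nu$ of Haar measure with $c\le 1$.

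The remaining point, which is Soundararajan's contribution, is to promote $c\le 1$ to $c=1$ by excluding the escape of mass into the cusp. I would establish this by bounding the mass that $\nu_g$ places on the cuspidal zones $\FF_\infty(Y)$ uniformly in $t_g$, exploiting the multiplicativity of the Hecke eigenvalues and the positivity of $|g|^2$ to obtain Rankin--Selberg-type control showing that no mass is lost in the limit; this is exactly the information detected by the incomplete Eisenstein series test functions. Combining the measure classification with the absence of escape of mass forces every weak-$\ast$ limit of $\nu_g$ to equal $\nu$, which is the claimed equidistribution.

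The hard part is the control of the cuspidal Weyl sums. Unconditionally this is the deep heart of the matter, resting on the entropy lower bound of Bourgain--Lindenstrauss and Lindenstrauss's measure classification; the purely analytic route via Watson's formula instead demands subconvexity for the triple product $L$-function in the eigenvalue aspect, which is unknown in general but would follow from the generalised Lindel\"{o}f hypothesis. Excluding the escape of mass in the style of Soundararajan is a second, logically independent obstacle that the measure-classification step does not by itself resolve.
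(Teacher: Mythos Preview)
The paper does not supply a proof of this theorem. It is stated with attribution to Lindenstrauss and Soundararajan and immediately followed by the remark that ``the proof of \hyperref[thm:ineffectiveMaassQUE]{Theorem \ref*{thm:ineffectiveMaassQUE}} is via ergodic methods and gives no quantifiable rate of equidistribution''; the paper's own contribution is the conditional quantitative refinement \hyperref[thm:MaassQUE]{Theorem \ref*{thm:MaassQUE}}. There is therefore nothing in the paper to compare your proposal against.

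That said, what you have written is an accurate high-level outline of the actual Lindenstrauss--Soundararajan argument: reduction to Weyl sums against cusp forms and incomplete Eisenstein series, microlocal lift to $\Gamma\backslash\SL_2(\R)$, geodesic-flow invariance of any weak-$\ast$ limit, positive entropy from Hecke recurrence (Bourgain--Lindenstrauss), Lindenstrauss's measure classification forcing the limit to be $c\,\nu$ with $c\le 1$, and Soundararajan's exclusion of escape of mass to pin down $c=1$. You correctly flag that the substantive content lies in the cited works and that your text is a roadmap rather than a proof. The one minor caveat is that your invocation of \hyperref[thm:noncompact]{Theorem \ref*{thm:noncompact}} and the associated Eisenstein-series Weyl sums is tangential here: the Lindenstrauss argument does not proceed by bounding individual spectral Weyl sums, and the paper's Berry--Esseen inequality plays no role in the ineffective statement.
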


The proof of \hyperref[thm:ineffectiveMaassQUE]{Theorem \ref*{thm:ineffectiveMaassQUE}} is via ergodic methods and gives no quantifiable rate of equidistribution. Earlier, Watson proved that the assumption of the generalised Lindel\"{o}f hypothesis (or merely certain as-yet unproven subconvex bounds for various $L$-functions; see \cite{BHMWW24,Nel25}) also implies the quantum unique ergodicity conjecture for $\Gamma \backslash \Hb$ \cite[Corollary 1]{Wat08}. We show how this assumption yields strong upper bounds for the related $1$-Wasserstein distance via an application of \hyperref[thm:noncompact]{Theorem \ref*{thm:noncompact}}, which is applicable as the measures $\nu_g$ are $Y^{-\alpha}$-cuspidally tight for any $\alpha \geq 0$ due to the fact that Maa\ss{} cusp forms decay exponentially at cusps.

\begin{theorem}
\label{thm:MaassQUE}
Let $g \in \BB$ be a Hecke--Maa\ss{} cusp form and let $\nu_g$ denote the Borel probability measure on the modular surface associated to $|g|^2$ via \eqref{eqn:nugdefeq}. Under the assumption of the generalised Lindel\"{o}f hypothesis, we have that
\[\Wscr_1(\nu_g,\nu) \ll_{\e} t_g^{-\frac{1}{2} + \e}.\]
\end{theorem}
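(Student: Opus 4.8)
The plan is to apply \hyperref[thm:noncompact]{Theorem \ref*{thm:noncompact}} with $\nu_1 = \nu_g$ and $\nu_2 = \nu$, and then to identify the Weyl sums appearing on the right-hand side with triple product $L$-values via Watson's formula. The key observation is that the continuous-spectrum contribution essentially vanishes: for the Haar measure $\nu$, the integral $\int_{\Gamma \backslash \Hb} f(z)\, d\nu(z)$ of any cusp form $f$ is zero by orthogonality to the constant function, and similarly $\int_{\Gamma \backslash \Hb} E_{\infty}(z, \tfrac12 + it)\, d\nu(z)$ contributes only through the constant term structure; meanwhile for $\nu_g$ the relevant quantities $\int_{\Gamma \backslash \Hb} f(z) |g(z)|^2\, d\mu(z)$ are precisely the spectral coefficients whose squares Watson's formula evaluates. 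So the first step is to reduce the bound to controlling $\sum_{f \in \BB} \frac{e^{-t_f^2/T^2}}{\frac14 + t_f^2} \left|\int_{\Gamma \backslash \Hb} f(z) |g(z)|^2\, d\mu(z)\right|^2$ together with the analogous Eisenstein integral, and to choose the free parameter $T$ optimally at the end.

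The second step is to invoke Watson's formula, which expresses $\left|\int_{\Gamma \backslash \Hb} f(z) |g(z)|^2\, d\mu(z)\right|^2$ as a completed triple product $L$-value $\frac{\Lambda(\tfrac12, g \times g \times f)}{\Lambda(1,\mathrm{sym}^2 g)^2 \Lambda(1, \mathrm{sym}^2 f)}$ up to explicit archimedean factors, and likewise writes the Eisenstein coefficient $\left|\int_{\Gamma \backslash \Hb} E_{\infty}(z, \tfrac12 + it) |g(z)|^2\, d\mu(z)\right|^2$ in terms of $|L(\tfrac12 + it, g \times g)|^2 = |\zeta(\tfrac12 + it) L(\tfrac12 + it, \mathrm{sym}^2 g)|^2$ divided by appropriate factors. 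Under the generalised Lindel\"{o}f hypothesis, each finite $L$-value is bounded by $(t_g(1 + |t_f|))^{\e}$ (respectively $(t_g(1 + |t|))^{\e}$), and the denominators $\Lambda(1, \mathrm{sym}^2 g)$ are bounded below by $t_g^{-\e}$ by standard Siegel-type lower bounds. The delicate part is the archimedean analysis: the gamma factors in Watson's formula produce exponential decay in $|t_f|$ once $|t_f|$ exceeds $2 t_g$, but within the bulk range $|t_f| \leq 2 t_g$ they contribute a factor of size roughly $t_g^{-1}$, and it is the interplay between this archimedean saving and the Gaussian cutoff $e^{-t_f^2/T^2}$ that governs the final exponent.

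The third step is the spectral count: I would split the sum over $f$ into the range $|t_f| \leq t_g$, where the archimedean factor behaves like $t_g^{-1}$ times a slowly varying function, and the transition and tail ranges $|t_f| > t_g$, where exponential decay renders the contribution negligible. Using the Weyl law $\#\{f : |t_f| \leq X\} \ll X^2$ to count eigenvalues, and incorporating the $\frac{1}{\frac14 + t_f^2}$ weight, the bulk contribution is of order $t_g^{-1 + \e}$, and the Eisenstein integral is handled identically by the same gamma-factor analysis with $\#\{f\}$ replaced by $\int dt$. The square root in \eqref{eqn:noncompact} then yields a spectral term of size $t_g^{-1/2 + \e}$. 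Choosing $T$ to be a fixed power of $t_g$ large enough that $T^{-1}$ does not dominate—concretely any $T \gg t_g^{1/2 + \e}$ suffices so that the $\frac{1}{T}$ term is subsumed—gives $\Wscr_1(\nu_g, \nu) \ll_{\e} t_g^{-1/2 + \e}$.

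The hard part will be the careful archimedean estimate in Watson's formula: I must track the precise size and decay of the ratio of gamma factors as a function of both $t_f$ and $t_g$, verify that the transition region $|t_f| \approx 2 t_g$ does not produce an anomalously large contribution, and confirm that the $t_g^{-1}$ saving emerges cleanly after summation rather than being eroded by the density of eigenvalues. A secondary technical point is ensuring that the Eisenstein term, which involves $|\zeta(\tfrac12 + it)|^2$, is controlled near $t = 0$ where the weight $\frac{1}{\frac14 + t^2}$ is largest; here the cuspidal tightness of $\nu_g$ (valid for all $\alpha \geq 0$) guarantees integrability, and the pole of $\zeta$ at $s = 1$ lies safely off the critical line, so no additional main term intervenes.
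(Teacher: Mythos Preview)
Your proposal is correct and follows essentially the same approach as the paper: apply \hyperref[thm:noncompact]{Theorem \ref*{thm:noncompact}}, invoke the Watson--Ichino formula (\hyperref[lem:WatsonIchino]{Lemma \ref*{lem:WatsonIchino}}) to express the Weyl sums as ratios of $L$-values, bound the archimedean factor $H(t,t_g)$ via Stirling (the paper records the precise four-range estimate, which in particular confirms your claimed $t_g^{-1}$ saving in the bulk and exponential decay past $|t|=2t_g$), insert the generalised Lindel\"{o}f hypothesis together with the Hoffstein--Lockhart lower bound $L(1,\ad g)\gg 1/\log t_g$, and sum via the (weighted) Weyl law. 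The paper simply takes $T=t_g^{1/2}$ rather than your $T\gg t_g^{1/2+\e}$, but either choice balances the $1/T$ term against the spectral term and gives the stated bound.
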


\section{Tools}

In order to insert the weight $e^{-t_f^2/T^2}$ into the inequalities \eqref{eqn:compact} and \eqref{eqn:noncompact}, we must determine the behaviour of the inverse Selberg--Harish-Chandra transform of this function (or, more precisely, a rescaling of this function).

\begin{lemma}
For $T \geq 1$, define $h : \C \to \C$ by
\begin{equation}
\label{eqn:hdefeq}
h(t) \coloneqq e^{-\frac{t^2 + \frac{1}{4}}{2T^2}}.
\end{equation}
Let $k : \R_+ \to \C$ be the inverse Selberg--Harish-Chandra transform (or inverse Mehler--Fock transform) of $h$ given by
\begin{equation}
\label{eqn:kdefeq}
k(u) \coloneqq \frac{1}{4\pi} \int_{-\infty}^{\infty} h(t) P_{-\frac{1}{2} + it}(1 + 2u) t \tanh \pi t \, dt,
\end{equation}
where $P_{\nu}(z)$ denotes the associated Legendre function. Then $k$ is nonnegative and satisfies
\begin{align}
\label{eqn:kint}
\int_{0}^{\infty} k(u) \, du & = \frac{1}{4\pi},	\\
\label{eqn:karsinhint}
\int_{0}^{\infty} k(u) \arsinh \sqrt{u} \, du & \ll \frac{1}{T}.
\end{align}
\end{lemma}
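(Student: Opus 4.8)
The plan is to recognise $h$ as the spectral transform of the hyperbolic heat kernel and to read off all three assertions from well-known properties of that kernel. Writing $\tau \coloneqq \frac{1}{2T^2}$, we have $h(t) = e^{-(\frac{1}{4} + t^2)\tau}$, which is precisely the multiplier of the heat semigroup $e^{\tau \Delta}$ on the eigenspace with Laplace eigenvalue $\frac{1}{4} + t^2$. Since $h$ is even, entire, and of Gaussian decay on every horizontal line, it lies in the class for which the Selberg--Harish-Chandra transform is a bijection, so the inversion integral \eqref{eqn:kdefeq} recovers the unique radial point-pair invariant with spectral transform $h$. As the hyperbolic heat kernel $K_\tau(z,w)$ is exactly such a point-pair invariant with this spectral transform, I would identify $k(u(z,w)) = K_\tau(z,w)$, where $u = \sinh^2(\rho/2)$ as in \eqref{eqn:udefeq}. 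The nonnegativity of $k$ is then immediate from the strict positivity of the heat kernel (visible, for instance, from its explicit integral representation).

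For \eqref{eqn:kint}, I would pass to geodesic polar coordinates about a fixed point, in which $d\mu = \sinh \rho \, d\rho \, d\theta$, and use the substitution $u = \sinh^2(\rho/2)$, $\sinh \rho \, d\rho = 2 \, du$, to record the general identity $\int_{\Hb} k(u(z,w)) \, d\mu(w) = 4\pi \int_{0}^{\infty} k(u) \, du$. Applying the kernel operator to the constant function, which is the spherical function at $t = \frac{i}{2}$ (where the order $-\frac{1}{2} + it$ equals $-1$ and $P_{-1}(\cosh \rho) = 1$), the left-hand side equals $h(\frac{i}{2})$; since $(\frac{i}{2})^2 + \frac{1}{4} = 0$ we have $h(\frac{i}{2}) = 1$, and \eqref{eqn:kint} follows. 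Equivalently, this is the conservation of heat $\int_{\Hb} K_\tau \, d\mu = 1$.

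For \eqref{eqn:karsinhint}, the key observation is that $\arsinh \sqrt{u} = \frac{\rho}{2}$, so by Cauchy--Schwarz and \eqref{eqn:kint} it suffices to control the second moment $\int_{0}^{\infty} k(u) (\arsinh \sqrt{u})^2 \, du = \frac{1}{16\pi} \int_{\Hb} K_\tau(z,w) \rho(z,w)^2 \, d\mu(w)$, for which I would aim to prove the bound $\ll \tau$ uniformly for $\tau \leq \frac{1}{2}$. Setting $M(\tau) \coloneqq \int_{\Hb} K_\tau \rho^2 \, d\mu$, differentiating under the integral sign and integrating by parts against the radial Laplacian $\Delta(\rho^2) = 2 + 2\rho \coth \rho$ gives $M'(\tau) = 2 + 2 \int_{\Hb} K_\tau \rho \coth \rho \, d\mu$. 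Using $\rho \coth \rho \leq \rho + 1$, the normalisation $\int_{\Hb} K_\tau \, d\mu = 1$, and Cauchy--Schwarz, this yields the differential inequality $M'(\tau) \leq 4 + 2 M(\tau)^{1/2}$ with $M(0) = 0$, from which a short bootstrap argument gives $M(\tau) \ll \tau$ on $(0,\frac{1}{2}]$. Combining the pieces, $\int_{0}^{\infty} k(u) \arsinh \sqrt{u} \, du \ll (\frac{1}{4\pi})^{1/2} (\frac{M(\tau)}{16\pi})^{1/2} \ll \sqrt{\tau} \ll \frac{1}{T}$.

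The main obstacle I anticipate is the uniformity in $T$ of this second-moment bound: on the negatively curved space $\Hb$ the moments of the heat kernel grow super-linearly in $\tau$ for large $\tau$, so an estimate of the shape $M(\tau) \ll \tau$ can only hold on a bounded range of $\tau$ --- which is exactly the range $\tau = \frac{1}{2T^2} \leq \frac{1}{2}$ available to us since $T \geq 1$. Making the differentiation under the integral sign and the integration by parts rigorous requires the (standard but nontrivial) Gaussian-type off-diagonal decay of $K_\tau$ to kill the boundary contributions arising from the linear growth of $\Delta(\rho^2)$ at infinity; alternatively, one may run the identical computation probabilistically via Dynkin's formula for Brownian motion on $\Hb$, where stochastic completeness guarantees the normalisation $\int_{\Hb} K_\tau \, d\mu = 1$.
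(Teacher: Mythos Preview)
Your proposal is correct and takes a genuinely different route from the paper. The paper never names the heat kernel: it invokes the integral representation
\[P_{-\frac{1}{2}+it}(\cosh\rho)\tanh\pi t = \frac{1}{\pi}\int_\rho^\infty \frac{\sin(tv)}{\sqrt{\sinh^2\frac{v}{2}-\sinh^2\frac{\rho}{2}}}\,dv\]
to write $k(\sinh^2\frac{\rho}{2})$ as a double integral, evaluates the inner Fourier integral $\int_{-\infty}^\infty h(t)\,t\sin(tv)\,dt = \sqrt{2\pi}\,T^3 e^{-1/8T^2}\,v\,e^{-T^2 v^2/2}$ explicitly (this is where nonnegativity comes from), and then handles \eqref{eqn:karsinhint} by swapping the order of integration, integrating by parts in $\rho$, and applying the crude estimate $\int_0^v\sqrt{\sinh^2\frac{v}{2}-\sinh^2\frac{\rho}{2}}\,d\rho \leq v\sinh\frac{v}{2}$. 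Your argument replaces all of this with structural facts about the heat semigroup: positivity and mass conservation give nonnegativity and \eqref{eqn:kint} for free, while \eqref{eqn:karsinhint} comes from the second-moment differential inequality $M'(\tau) \leq 4 + 2M(\tau)^{1/2}$ obtained via $\rho\coth\rho \leq 1 + \rho$. The paper's route is entirely self-contained (no appeal to off-diagonal heat-kernel decay to justify the integration by parts at infinity) and yields explicit constants; yours is more conceptual and would transfer to any nonnegative radial kernel on $\Hb$ arising from a semigroup, not just this particular Gaussian $h$. Both arguments exploit the hypothesis $T \geq 1$ in the same essential place, keeping $\tau = \frac{1}{2T^2} \leq \frac{1}{2}$ in the regime where the second moment grows linearly rather than being overwhelmed by the exponential volume growth of $\Hb$.
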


\begin{proof}
Via the Selberg--Harish-Chandra inversion formula (or Mehler--Fock inversion formula), we have that
\[h(t) = 4\pi \int_{0}^{\infty} k(u) P_{-\frac{1}{2} + it}(1 + 2u) \, du\]
(see \cite[Chapter 1]{Iwa02}). Since $P_{-1}(z) = P_0(z) = 1$, we deduce that
\[\int_{0}^{\infty} k(u) \, du = \frac{1}{4\pi} h\left(\frac{i}{2}\right) = \frac{1}{4\pi}.\]

Next, we invoke the identity \cite[(8.715.2) and (8.737.4)]{GR15}
\[P_{-\frac{1}{2} + it}(\cosh \rho) \tanh \pi t = \frac{1}{\pi} \int_{\rho}^{\infty} \frac{\sin(tv)}{\sqrt{\sinh^2 \frac{v}{2} - \sinh^2 \frac{\rho}{2}}} \, dv\]
in order to see that
\[k\left(\sinh^2 \frac{\rho}{2}\right) = \frac{1}{8\pi^2} \int_{\rho}^{\infty} \frac{1}{\sqrt{\sinh^2 \frac{v}{2} - \sinh^2 \frac{\rho}{2}}} \int_{-\infty}^{\infty} h(t) t \sin(tv) \, dt \, dv.\]
For our choice of test function $h$, the inner integral is equal to
\[\sqrt{2\pi} T^3 e^{-\frac{1}{8T^2}} v e^{-\frac{T^2 v^2}{2}},\]
which implies the nonnegativity of $k$.

Finally, by integration by parts, we have that
\begin{align*}
\int_{0}^{\infty} k(u) \arsinh \sqrt{u} \, du & = \frac{1}{4} \int_{0}^{\infty} k\left(\sinh^2 \frac{\rho}{2}\right) \rho \sinh \rho \, d\rho	\\
& = \frac{1}{16\sqrt{2} \pi^{3/2}} T^3 e^{-\frac{1}{8T^2}} \int_{0}^{\infty} \rho \sinh \rho \int_{\rho}^{\infty} \frac{v e^{-\frac{T^2 v^2}{2}}}{\sqrt{\sinh^2 \frac{v}{2} - \sinh^2 \frac{\rho}{2}}} \, dv \, d\rho	\\
& = \frac{1}{4\sqrt{2} \pi^{3/2}} T^3 e^{-\frac{1}{8T^2}} \int_{0}^{\infty} v e^{-\frac{T^2 v^2}{2}} \int_{0}^{v} \sqrt{\sinh^2 \frac{v}{2} - \sinh^2 \frac{\rho}{2}} \, d\rho \, dv.
\end{align*}
The inner integral is bounded by $v \sinh \frac{v}{2}$. Thus this is bounded by
\[\frac{1}{2\pi^{3/2}} e^{-\frac{1}{8T^2}} \int_{0}^{\infty} v^2 e^{-v^2} \sinh \frac{v}{\sqrt{2} T} \, dv.\]
which is $O(\frac{1}{T})$.
\end{proof}

\begin{remark}
The holomorphic entire function $h$ given by \eqref{eqn:hdefeq} is nonnegative on $\R$, bounded above by $1$ on $[-T,T]$, decays at a Gaussian rate outside of $[-T,T]$, and is such that its Selberg--Harish-Chandra transform $k$ satisfies $\int_{0}^{\infty} k(u) \, du = \frac{1}{4\pi}$. One might rather work with a nonnegative function on $\R$ that is bounded above by $1$ on $[-T,T]$ and uniformly zero outside this interval. However, for any such function $h$, the integral transform $\int_{-\infty}^{\infty} h(t) t \sin(tv) \, dt$ cannot be $O(e^{-cv})$ for any $c > 0$ (for otherwise $h$ would be holomorphic in a horizontal strip containing $\R$, hence uniformly zero due to the identity theorem, so that its Selberg--Harish-Chandra transform would also be uniformly zero). This means the above method cannot be used to ensure the identity \eqref{eqn:kint} and the bound \eqref{eqn:karsinhint} for the inverse Selberg--Harish-Chandra transform $k$ given by \eqref{eqn:kdefeq}.
\end{remark}

Next, we state some properties of the automorphic kernel associated to $k$.

\begin{lemma}
\label{lem:Kproperties}
Let $\Gamma$ be a lattice in $\SL_2(\R)$. Define the automorphic kernel $K : \Gamma \backslash \Hb \times \Gamma \backslash \Hb \to \C$ by
\begin{equation}
\label{eqn:Kdefeq}
K(z,w) \coloneqq \sum_{\gamma \in \Gamma} k(u(z,\gamma w)),
\end{equation}
where $k$ is as in \eqref{eqn:kdefeq} and $u$ is as in \eqref{eqn:udefeq}. Then $K(w,z) = K(z,w)$ for all $z,w \in \Gamma \backslash \Hb$ and
\begin{equation}
\label{eqn:Kint}
\int_{\Gamma \backslash \Hb} K(z,w) \, d\mu(w) = \int_{\Gamma \backslash \Hb} K(w,z) \, d\mu(w) = 1
\end{equation}
for all $z \in \Gamma \backslash \Hb$.
\end{lemma}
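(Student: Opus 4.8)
The plan is to deduce both assertions from the symmetry and $\SL_2(\R)$-invariance of the point-pair invariant $u$ recorded in \eqref{eqn:udefeq}, together with the invariance of $d\mu$ and the normalisation \eqref{eqn:kint}. Throughout, the nonnegativity of $k$ furnished by the previous lemma is what makes every interchange of summation and integration legitimate via Tonelli's theorem, without separately verifying absolute convergence. That said, the Gaussian decay of $k(\sinh^2(\rho/2))$ exhibited in the proof of the previous lemma also guarantees that the defining series \eqref{eqn:Kdefeq} converges absolutely and locally uniformly, since the hyperbolic lattice point count $\#\{\gamma \in \Gamma : \rho(z,\gamma w) \leq R\}$ grows only exponentially in $R$, so $K$ is genuinely well defined.

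For the symmetry, I would start from $K(w,z) = \sum_{\gamma \in \Gamma} k(u(w,\gamma z))$ and rewrite each term. First, the expression in \eqref{eqn:udefeq} is manifestly symmetric in its two arguments, so $u(w,\gamma z) = u(\gamma z,w)$; then its $\SL_2(\R)$-invariance applied with $g = \gamma^{-1}$ gives $u(\gamma z,w) = u(z,\gamma^{-1}w)$. Since $\gamma \mapsto \gamma^{-1}$ is a bijection of $\Gamma$, reindexing the sum yields $K(w,z) = \sum_{\gamma \in \Gamma} k(u(z,\gamma w)) = K(z,w)$.

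For the normalisation, I would compute $\int_{\Gamma \backslash \Hb} K(z,w) \, d\mu(w)$ by the standard unfolding argument: interchanging the nonnegative sum with the integral and applying the substitution $w \mapsto \gamma^{-1} w$ term by term, using the $\SL_2(\R)$-invariance of $d\mu$, reglues the sum over $\gamma$ of integrals over a fundamental domain into a single integral over all of $\Hb$, so that $\int_{\Gamma \backslash \Hb} K(z,w) \, d\mu(w) = \int_{\Hb} k(u(z,w)) \, d\mu(w)$. This last integral is independent of $z$ by invariance, and passing to geodesic polar coordinates centred at $z$, in which $d\mu(w) = \sinh \rho \, d\rho \, d\theta$ and $u = \sinh^2(\rho/2)$ so that $\sinh \rho \, d\rho = 2 \, du$, reduces it to $2\pi \int_0^{\infty} k(u) \cdot 2 \, du = 4\pi \int_0^{\infty} k(u)\,du$. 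By \eqref{eqn:kint} this equals $1$, giving the first equality in \eqref{eqn:Kint}. The second equality then follows at once from the symmetry established above.

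This lemma is essentially routine, so there is no real obstacle; the one step deserving care is the unfolding, namely the interchange of summation and integration and the gluing of the translates $\{\gamma \mathcal{F}\}_{\gamma \in \Gamma}$ of a fundamental domain $\mathcal{F}$ into a tiling of $\Hb$ up to measure zero. The nonnegativity of $k$ resolves this cleanly, since Tonelli's theorem applies directly to the nonnegative integrand and a priori guarantees a well-defined value in $[0,\infty]$, which the explicit computation then identifies as $1$.
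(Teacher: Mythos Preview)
Your proposal is correct and follows essentially the same route as the paper: symmetry of $K$ via the symmetry and $\SL_2(\R)$-invariance of $u$ together with reindexing $\gamma \mapsto \gamma^{-1}$, and the normalisation via unfolding to $\int_{\Hb} k(u(z,w))\,d\mu(w)$ followed by geodesic polar coordinates and \eqref{eqn:kint}. Your added remarks on convergence and the use of Tonelli's theorem are accurate and slightly more explicit than the paper, but the argument is the same.
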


\begin{proof}
The fact that $K(w,z) = K(z,w)$ for all $z,w \in \Gamma \backslash \Hb$ follows from the definition \eqref{eqn:udefeq} of $u$. Next, by unfolding and using the $\SL_2(\R)$-invariance of $u$ and $d\mu$, we have that
\begin{align*}
\int_{\Gamma \backslash \Hb} K(z,w) \, d\mu(w) & = \int_{\Hb} k(u(i,w)) \, d\mu(w)	\\
& = \int_{0}^{\infty} \int_{-\infty}^{\infty} k\left(\frac{x^2 + (y - 1)^2}{4y^2}\right) \, \frac{dx \, dy}{y^2}.
\end{align*}
We pass to geodesic polar coordinates by setting
\begin{equation}
\label{eqn:geodesicpolar}
x = \frac{2 \sqrt{u(u + 1)} \sin \theta}{1 + 2u + 2 \sqrt{u(u + 1)} \cos \theta}, \qquad y = \frac{1}{1 + 2u + 2 \sqrt{u(u + 1)} \cos \theta},
\end{equation}
so that $u \in \R_+$, $\theta \in [0,2\pi)$, and $y^{-2} \, dx \, dy = 2 \, du \, d\theta$ (see \cite[Section 1.3]{Iwa02}). Thus this double integral becomes
\[2 \int_{0}^{2\pi} \int_{0}^{\infty} k(u) \, du \, d\theta,\]
which is equal to $1$ by \eqref{eqn:kint}.
\end{proof}

The final key tool that we require is a careful approximation of a $1$-Lipschitz function on $\Gamma \backslash \Hb$ by a smooth function.

\begin{lemma}
\label{lem:smoothing}
Let $\Gamma$ be a lattice in $\SL_2(\R)$. Given $F \in \Lip_1(\Gamma \backslash \Hb)$ and $\e > 0$, there exists some $F_{\e} \in C^{\infty}(\Gamma \backslash \Hb)$ for which
\begin{align}
\label{eqn:Fesup}
\sup_{z \in \Gamma \backslash \Hb} |F(z) - F_{\e}(z)| & \leq \e,	\\
\label{eqn:nablaFesup}
\sup_{z \in \Gamma \backslash \Hb} \Im(z)^2 \left|\frac{\dee F_{\e}}{\dee z}\right|^2 & \leq \left(e^{\e} - \frac{1}{2}\right)^2,
\end{align}
where $\frac{\dee}{\dee z} \coloneqq \frac{1}{2} (\frac{\dee}{\dee x} - i \frac{\dee}{\dee y})$ denotes the Wirtinger derivative.
\end{lemma}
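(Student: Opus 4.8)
The plan is to obtain $F_\e$ by mollifying $F$ against a smooth radial kernel concentrated near the diagonal. Fix a nonnegative $\psi \in C^{\infty}_c([0,1))$ with $\int_0^{\infty} \psi = 1$ and, for a concentration parameter $\delta > 0$ to be chosen, set $k_{\delta}(u) \coloneqq (4\pi\delta)^{-1} \psi(u/\delta)$, so that $\int_0^{\infty} k_{\delta}(u) \, du = \frac{1}{4\pi}$, and define
\[F_\e(z) \coloneqq \int_{\Hb} F(w) \, k_{\delta}(u(z,w)) \, d\mu(w).\]
Since $\psi$ has compact support the integrand vanishes outside a fixed hyperbolic ball about $z$, so the integral converges; as $k_{\delta}$ is smooth and $u(z,w)$ is a smooth function of $z$ on all of $\Hb$, differentiation under the integral sign gives $F_\e \in C^{\infty}$. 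Substituting $w \mapsto \gamma w$ and using the $\SL_2(\R)$-invariance of $u$ and $d\mu$ together with the $\Gamma$-invariance of $F$ shows $F_\e(\gamma z) = F_\e(z)$, so $F_\e$ descends to $\Gamma \backslash \Hb$. This is the mollification implicit in \hyperref[lem:Kproperties]{Lemma \ref*{lem:Kproperties}}.

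I would read off \eqref{eqn:Fesup} directly from the Lipschitz hypothesis. The unfolding computation in the proof of \hyperref[lem:Kproperties]{Lemma \ref*{lem:Kproperties}} gives $\int_{\Hb} k_{\delta}(u(z,w)) \, d\mu(w) = 1$, so
\[|F_\e(z) - F(z)| \leq \int_{\Hb} |F(w) - F(z)| \, k_{\delta}(u(z,w)) \, d\mu(w) \leq \int_{\Hb} \rho(z,w) \, k_{\delta}(u(z,w)) \, d\mu(w),\]
and passing to the geodesic polar coordinates \eqref{eqn:geodesicpolar} turns the right-hand side into $8\pi \int_0^{\infty} \arsinh\sqrt{u} \, k_{\delta}(u) \, du$, which is $O(\sqrt{\delta})$.

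The substantive step is \eqref{eqn:nablaFesup}. For real-valued $F_\e$ one has $4\Im(z)^2 \lvert \dee F_\e/\dee z \rvert^2 = \lvert \nabla F_\e \rvert^2$, the squared hyperbolic gradient norm, so it is equivalent to prove $\lvert \nabla F_\e \rvert \leq 2e^\e - 1$; since the gradient norm is the supremum of the directional derivatives it suffices to bound each of these. Using that $\SL_2(\R)$ acts transitively on the unit tangent bundle and that $F_\e \circ g = (F \circ g)_\e$ with $F \circ g$ again $1$-Lipschitz for $g \in \SL_2(\R)$, the chain rule reduces every directional derivative, at every point, to bounding $\frac{d}{dt}\big|_{t=0} F_\e(a_t i)$ for an arbitrary $1$-Lipschitz $F$, where $a_t = \begin{psmallmatrix} e^{t/2} & 0 \\ 0 & e^{-t/2} \end{psmallmatrix}$ generates the unit-speed vertical geodesic through $i$. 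The substitution $w = a_t w'$ gives $F_\e(a_t i) = \int_{\Hb} F(a_t w') \, k_{\delta}(u(i,w')) \, d\mu(w')$, so
\[\frac{F_\e(a_t i) - F_\e(i)}{t} = \int_{\Hb} \frac{F(a_t w') - F(w')}{t} \, k_{\delta}(u(i,w')) \, d\mu(w').\]
The bound $\lvert F(a_t w') - F(w') \rvert \leq \rho(a_t w', w')$ dominates the difference quotient and lets me pass to the limit, which circumvents the non-differentiability of $F$ and yields $\lvert \nabla F_\e(i) \rvert \leq \int_{\Hb} \lVert X(w') \rVert \, k_{\delta}(u(i,w')) \, d\mu(w')$, where $X$ is the Killing field generating $a_t$. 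A direct computation gives $\lVert X(w') \rVert = (1 + 4u(u+1)\sin^2\theta)^{1/2}$ in the coordinates \eqref{eqn:geodesicpolar}; integrating over $\theta$ and using $\sqrt{1+x} \leq 1 + \tfrac{x}{2}$ bounds the integral by $1 + 4\pi \int_0^{\infty} u(u+1) \, k_{\delta}(u) \, du$.

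The main obstacle is controlling this last expression against the sup-norm error simultaneously. The ambient negative curvature makes $\lVert X(w') \rVert$ grow like $\cosh \rho(i,w')$ away from $i$, so the deviation of $\lvert \nabla F_\e \rvert$ from the ideal value $1$ is governed by the second moment $\int_0^{\infty} u(u+1) \, k_{\delta} \, du$ (in the distance) of the kernel, whereas the sup-norm error is governed by its first moment $\int_0^{\infty} \arsinh\sqrt{u} \, k_{\delta} \, du$. These two moments scale as different powers of the concentration width, so the first-moment (hence sup-norm) constraint is the binding one and forces the curvature correction to be of lower order; since both moments tend to $0$ as $\delta \to 0$, choosing $\delta$ small enough in terms of $\e$ makes the sup-norm error at most $\e$ while keeping $\lvert \nabla F_\e \rvert \leq 2e^\e - 1 = 2(e^\e - \tfrac12)$, which is \eqref{eqn:nablaFesup}.
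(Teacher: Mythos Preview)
Your proposal is correct and follows essentially the same route as the paper: mollify $F$ against a compactly supported radial kernel on $\Hb$, use the $1$-Lipschitz bound to control $|F - F_\e|$ via the first moment $\int \arsinh\sqrt{u}\,k_\delta(u)\,du$, and bound the gradient by pushing the difference quotient inside the integral and dominating it by the hyperbolic length of the displacement $w' \mapsto a_t w'$.

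The only notable difference is bookkeeping in the gradient step. The paper fixes an arbitrary point $z$ and direction $v_0$, uses the specific upper-triangular element sending $i \mapsto z$, and bounds the resulting expression $|v_0 + \Im(v_0)(w-i)|/\Im(w)$ by $(1 + |w-i|)/\Im(w) \leq 2e^\e - 1$ on the support of the explicit kernel $k_\e$ (supported on $[0,\sinh^2\tfrac{\e}{2}]$). You instead invoke transitivity of $\SL_2(\R)$ on the unit tangent bundle together with the covariance $F_\e \circ g = (F \circ g)_\e$ to reduce to the single vertical direction at $i$, which yields the slightly sharper Killing-field norm $|w'|/\Im(w') = (1 + 4u(u+1)\sin^2\theta)^{1/2}$; you then integrate and choose $\delta$ small. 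Your packaging is a bit cleaner conceptually (one direction, one base point) at the cost of leaving $\delta$ implicit, whereas the paper's explicit choice $\delta = \sinh^2\tfrac{\e}{2}$ delivers the stated constant $(e^\e - \tfrac12)^2$ directly. Both arguments are the same in substance.
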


\begin{proof}
For fixed $\e > 0$, let $k_{\e} : \R_+ \to \C$ be the smooth nonnegative function
\[k_{\e}(u) \coloneqq \begin{dcases*}
\frac{1}{C\sinh^2 \frac{\e}{2}} \exp\left(\frac{\sinh^2 \frac{\e}{2}}{u^2 - \sinh^2 \frac{\e}{2}}\right) & if $0 < u < \sinh^2 \frac{\e}{2}$,	\\
0 & if $u \geq \sinh^2 \frac{\e}{2}$,
\end{dcases*}\]
where $C \coloneqq 4\pi \int_{0}^{1} \exp(\frac{1}{u^2 - 1}) \, du$. By passing to geodesic polar coordinates, we have that for all $z \in \Hb$,
\begin{equation}
\label{eqn:keint}
\int_{\Hb} k_{\e}(u(z,w)) \, d\mu(w) = 4\pi \int_{0}^{\infty} k_{\e}(u) \, du = 1.
\end{equation}
We let $K_{\e} : \Gamma \backslash \Hb \times \Gamma \backslash \Hb \to \C$ denote the automorphic kernel
\[K_{\e}(z,w) \coloneqq \sum_{\gamma \in \Gamma} k_{\e}(u(z,\gamma w)).\]

We now define the desired function $F_{\e} : \Hb \to \C$ via
\[F_{\e}(z) \coloneqq \int_{\Gamma \backslash \Hb} F(w) K_{\e}(z,w) \, d\mu(w) = \int_{\Hb} F(w) k_{\e}(u(z,w)) \, d\mu(w).\]
The smoothness of $F_{\e}$ is clear from the smoothness of $k_{\e}$, while the $\Gamma$-invariance of $F_{\e}$ follows from the $\SL_2(\R)$-invariance of $u$ and $\mu$ and the $\Gamma$-invariance of $F$.

Next, for any $z \in \Hb$, we have via \eqref{eqn:keint} that
\begin{align*}
\left|F(z) - F_{\e}(z)\right| & = \left|\int_{\Hb} (F(z) - F(w)) k_{\e}(u(z,w)) \, d\mu(w)\right|	\\
& \leq \int_{\Hb} \rho(z,w) k_{\e}(u(z,w)) \, d\mu(w)	\\
& = \int_{\Hb} \rho(i,w) k_{\e}(u(i,w)) \, d\mu(w)
\end{align*}
as $F$ is $1$-Lipschitz and $u,\rho,\mu$ are $\SL_2(\R)$-invariant. Passing to geodesic polar coordinates, observing that $\rho(i,w) = 2\arsinh\sqrt{u(i,w)} \leq \e$ for $u(i,w) \leq \sinh^2 \frac{\e}{2}$, and recalling \eqref{eqn:keint}, we thereby obtain \eqref{eqn:Fesup}.

When $\frac{\dee F_{\e}}{\dee z}$ is nonzero, its modulus is equal to half of the modulus of $\nabla_{v_0} F_{\e}(z)$, the directional derivative of $F_{\e}$ in the direction
\[v_0 = \frac{\frac{\dee F_{\e}}{\dee z}}{\left|\frac{\dee F_{\e}}{\dee z}\right|}.\]
Thus if $\frac{\dee F_{\e}}{\dee z} \neq 0$, then
\[\Im(z)^2 \left|\frac{\dee F_{\e}}{\dee z}\right|^2 = \frac{1}{4} \Im(z)^2 \left|\nabla_{v_0} F_{\e}(z)\right|^2 = \frac{1}{4} \Im(z)^2 \lim_{h \to 0} \left|\frac{F_{\e}(z + hv_0) - F_{\e}(z)}{h}\right|^2.\]
Once more using the facts that $u,\rho,\mu$ are $\SL_2(\R)$-invariant and $F$ is $1$-Lipschitz, we find that
\begin{align*}
\left|F_{\e}(z + hv_0) - F_{\e}(z)\right| \hspace{-1cm} & \hspace{1cm} = \left|\int_{\Hb} F(w) \left(k_{\e}(u(z + hv_0,w)) - k_{\e}(u(z,w))\right) \, d\mu(w)\right|	\\
& = \left|\int_{\Hb} \left(F(\Re(z + hv_0) + \Im(z + hv_0)w) - F(\Re(z) + \Im(z)w)\right) k_{\e}(u(i,w)) \, d\mu(w)\right|	\\
& \leq \int_{\Hb} \rho(\Re(z + hv_0) + \Im(z + hv_0)w,\Re(z) + \Im(z)w) k_{\e}(u(i,w)) \, d\mu(w).
\end{align*}
Next, we note that
\begin{align*}
\lim_{h \to 0} \frac{\rho(\Re(z + hv_0) + \Im(z + hv_0)w,\Re(z) + \Im(z)w)}{|h|} & = \frac{\left|v_0 + \Im(v_0) (w - i)\right|}{\Im(z) \Im(w)}	\\
& \leq \frac{1 + |w - i|}{\Im(z) \Im(w)}.
\end{align*}
It follows that
\[\Im(z)^2 \left|\frac{\dee F_{\e}}{\dee z}\right|^2 \leq \frac{1}{4} \left(\int_{\Hb} \frac{1 + |w - i|}{\Im(w)} k_{\e}(u(i,w)) \, d\mu(w)\right)^2.\]
We pass once more to geodesic polar coordinates and observe that
\[\frac{1 + |w - i|}{\Im(w)} \leq 2e^{\e} - 1\]
whenever $u(i,w) \leq \sinh^2 \frac{\e}{2}$. The bound \eqref{eqn:nablaFesup} then follows once more from \eqref{eqn:keint}.
\end{proof}

\section{Proofs of \texorpdfstring{\hyperref[thm:compact]{Theorems \ref*{thm:compact}} and \ref{thm:noncompact}}{Theorems \ref{thm:compact} and \ref{thm:noncompact}}}

We proceed to the proofs of \hyperref[thm:compact]{Theorems \ref*{thm:compact}} and \ref{thm:noncompact}. We only give details for the latter, since the proof of the former follows by a similar but simpler argument due to the lack of the continuous spectrum in this setting.

\begin{proof}[Proof of {\hyperref[thm:noncompact]{Theorem \ref*{thm:noncompact}}}]
Fix $\e > 0$. Let $F \in \Lip_1(\Gamma \backslash \Hb)$, let $K$ be as in \eqref{eqn:Kdefeq}, and let $F_{\e}$ be as in \hyperref[lem:smoothing]{Lemma \ref*{lem:smoothing}}. Via \eqref{eqn:Kint} and the triangle inequality, we have that
\begin{equation}
\label{eqn:triangleinequality}
\begin{split}
\hspace{1.5cm} & \hspace{-1.5cm} \left|\int_{\Gamma \backslash \Hb} F(z) \, d\nu_1(z) - \int_{\Gamma \backslash \Hb} F(z) \, d\nu_2(z)\right|	\\
& \leq \left|\int_{\Gamma \backslash \Hb} \int_{\Gamma \backslash \Hb} (F(z) - F(w)) K(z,w) \, d\mu(w) \, d\nu_1(z)\right|	\\
& \quad + \left|\int_{\Gamma \backslash \Hb} \int_{\Gamma \backslash \Hb} (F(z) - F(w)) K(z,w) \, d\mu(w) \, d\nu_2(z)\right|	\\
& \quad + \left|\int_{\Gamma \backslash \Hb} \int_{\Gamma \backslash \Hb} (F(w) - F_{\e}(w)) K(z,w) \, d\mu(w) \, d\nu_1(z)\right|	\\
& \quad + \left|\int_{\Gamma \backslash \Hb} \int_{\Gamma \backslash \Hb} (F(w) - F_{\e}(w)) K(z,w) \, d\mu(w) \, d\nu_2(z)\right|	\\
& \quad + \left|\int_{\Gamma \backslash \Hb} \int_{\Gamma \backslash \Hb} F_{\e}(w) K(z,w) \, d\mu(w) \, d\nu_1(z) - \int_{\Gamma \backslash \Hb} \int_{\Gamma \backslash \Hb} F_{\e}(w) K(z,w) \, d\mu(w) \, d\nu_2(z)\right|.
\end{split}
\end{equation}

To bound the first term on the right-hand side of \eqref{eqn:triangleinequality}, we use the triangle inequality and unfold the inner integral, so that this is bounded by
\[\int_{\Gamma \backslash \Hb} \int_{\Hb} |F(z) - F(w)| |k(u(z,w))| \, d\mu(w) \, d\nu_1(z).\]
Since $F$ is $1$-Lipschitz, the inner integral is in turn bounded by
\[\int_{\Hb} |k(u(z,w))| \rho(z,w) \, d\mu(w).\]
Via the $\SL_2(\R)$-invariance of $u,\rho,d\mu$ and then passing to geodesic polar coordinates, this is equal to
\[8\pi \int_{0}^{\infty} |k(u)| \arsinh \sqrt{u} \, du.\]
By the nonnegativity of $k$, the bound \eqref{eqn:karsinhint}, and the fact that $\nu_1$ is a probability measure, we deduce that the first term on the right-hand side of \eqref{eqn:triangleinequality} is $O(\frac{1}{T})$ independently of $F \in \Lip_1(\Gamma \backslash \Hb)$. The same argument yields the same bound for the second term.

To bound the third term on the right-hand side of \eqref{eqn:triangleinequality}, we use the triangle inequality and unfold the inner integral, so that this is bounded by
\[\int_{\Gamma \backslash \Hb} \int_{\Hb} |F(w) - F_{\e}(w)| |k(u(z,w))| \, d\mu(w) \, d\nu_1(z).\]
Via the bound \eqref{eqn:Fesup} for $|F(w) - F_{\e}(w)|$, the $\SL_2(\R)$-invariance of $d\mu$, the nonnegativity of $k$, the bound \eqref{eqn:kint}, and the fact that $\nu_1$ is a probability measure, this is at most $\e$. The same argument yields the same bound for the second term.

We are left with bounding the fifth term on the right-hand side of \eqref{eqn:triangleinequality}. By Parseval's identity for $L^2(\Gamma \backslash \Hb)$ \cite[Theorem 7.3]{Iwa02} and \eqref{eqn:Kint}, we have that
\begin{multline*}
\int_{\Gamma \backslash \Hb} F_{\e}(w) K(z,w) \, d\mu(w) = \frac{\langle F_{\e},1\rangle}{\mu(\Gamma \backslash \Hb)} + \sum_{f \in \BB} \langle F_{\e},f\rangle \int_{\Gamma \backslash \Hb} f(w) K(z,w) \, d\mu(z)	\\
+ \sum_{\af} \frac{1}{4\pi} \int_{-\infty}^{\infty} \left\langle F_{\e}, E_{\af}\left(\cdot, \frac{1}{2} + it\right)\right\rangle \int_{\Gamma \backslash \Hb} E_{\af}\left(w,\frac{1}{2} + it\right) K(z,w) \, d\mu(w) \, dt,
\end{multline*}
where $\langle \cdot,\cdot,\rangle$ denotes the Petersson inner product on $L^2(\Gamma \backslash \Hb)$ given by
\[\langle F_1,F_2\rangle \coloneqq \int_{\Gamma \backslash \Hb} F_1(z) \overline{F_2(z)} \, d\mu(z).\]
Since $f$ is a Laplacian eigenfunction with Laplacian eigenvalue $\frac{1}{4} + t_f^2$, and similarly $E_{\af}(w,\frac{1}{2} + it)$ is a Laplacian eigenfunction with Laplacian eigenvalue $\frac{1}{4} + t^2$, we have that
\begin{align*}
\int_{\Gamma \backslash \Hb} f(w) K(z,w) \, d\mu(z) & = \int_{\Hb} f(w) k(u(z,w)) \, d\mu(z)	\\
& = h(t_f) f(z),	\\
\int_{\Gamma \backslash \Hb} E_{\af}\left(w,\frac{1}{2} + it\right) K(z,w) \, d\mu(w) & = \int_{\Hb} E_{\af}\left(w,\frac{1}{2} + it\right) k(u(z,w)) \, d\mu(w)	\\
& = h(t) E_{\af}\left(z,\frac{1}{2} + it\right),
\end{align*}
where $h$ is as in \eqref{eqn:hdefeq} \cite[Theorem 1.14]{Iwa02}. We insert these identities and integrate both sides over $z \in \Gamma \backslash \Hb$ with respect to $\nu_1$ and with respect to $\nu_2$ and take the difference. It is at this point that we require the $Y^{-1/2 - \delta}$-cuspidally tightness of $\nu_1$ and $\nu_2$ in order to ensure that each Eisenstein series $E_{\af}(\cdot,\frac{1}{2} + it)$ is $\nu_1$- and $\nu_2$-integrable. Indeed, we may interchange the order of integration as absolute convergence is guaranteed via the Cauchy--Schwarz inequality together with the local Weyl law \cite[Proposition 7.2]{Iwa02}, which states that for $z \in \Hb$ and $U \geq 1$,
\[\sum_{\substack{f \in \BB \\ |t_f| \leq U}} |f(z)|^2 + \sum_{\af} \frac{1}{4\pi} \int_{-U}^{U} \left|E_{\af}\left(z,\frac{1}{2} + it\right)\right|^2 \, dt \ll_{\Gamma} U^2 + U \Ht_{\Gamma}(z).\]
We deduce via the Cauchy--Schwarz inequality that
\begin{multline}
\label{eqn:CauchySchwarz}
\left|\int_{\Gamma \backslash \Hb} \int_{\Gamma \backslash \Hb} F_{\e}(w) K(z,w) \, d\mu(w) \, d\nu_1(z) - \int_{\Gamma \backslash \Hb} \int_{\Gamma \backslash \Hb} F_{\e}(w) K(z,w) \, d\mu(w) \, d\nu_2(z)\right|^2	\\
\leq \left(\sum_{f \in \BB} \left(\frac{1}{4} + t_f^2\right) |\langle F_{\e},f\rangle|^2 + \sum_{\af} \frac{1}{4\pi} \int_{-\infty}^{\infty} \left(\frac{1}{4} + t^2\right) \left|\left\langle F_{\e}, E_{\af}\left(\cdot, \frac{1}{2} + it\right)\right\rangle\right|^2 \, dt\right)	\\
\times \left(\sum_{f \in \BB} \frac{e^{-\frac{t_f^2 + \frac{1}{4}}{T^2}}}{\frac{1}{4} + t_f^2} \left|\int_{\Gamma \backslash \Hb} f(z) \, d\nu_1(z) - \int_{\Gamma \backslash \Hb} f(z) \, d\nu_2(z)\right|^2 \right.	\\
\left. + \sum_{\af} \frac{1}{4\pi} \int_{-\infty}^{\infty} \frac{e^{-\frac{t^2 + \frac{1}{4}}{T^2}}}{\frac{1}{4} + t^2} \left|\int_{\Gamma \backslash \Hb} E_{\af}\left(z,\frac{1}{2} + it\right) \, d\nu_1(z) - \int_{\Gamma \backslash \Hb} E_{\af}\left(z,\frac{1}{2} + it\right) \, d\nu_2(z)\right|^2 \, dt\right).
\end{multline}
To bound the second line of \eqref{eqn:CauchySchwarz}, we use the self-adjointness of $\Delta$ with respect to the Petersson inner product, Parseval's identity, and Green's first identity (see \cite[Lemma 4.1]{Iwa02}) in order to see that
\begin{align*}
\hspace{1.5cm} & \hspace{-1.5cm} \sum_{f \in \BB} \left(\frac{1}{4} + t_f^2\right) |\langle F_{\e},f\rangle|^2 + \sum_{\af} \frac{1}{4\pi} \int_{-\infty}^{\infty} \left(\frac{1}{4} + t^2\right) \left|\left\langle F_{\e}, E_{\af}\left(\cdot, \frac{1}{2} + it\right)\right\rangle\right|^2 \, dt	\\
& = \sum_{f \in \BB} \langle \Delta F_{\e},f\rangle \langle f,F_{\e}\rangle + \sum_{\af} \frac{1}{4\pi} \int_{-\infty}^{\infty} \left\langle \Delta F_{\e}, E_{\af}\left(\cdot, \frac{1}{2} + it\right)\right\rangle \left\langle E_{\af}\left(\cdot, \frac{1}{2} + it\right), F_{\e}\right\rangle \, dt	\\
& = \langle \Delta F_{\e},F_{\e}\rangle	\\
& = 4 \int_{\Gamma \backslash \Hb} \Im(z)^2 \left|\frac{\dee F_{\e}}{\dee z}\right|^2 \, d\mu(z).
\end{align*}
By \eqref{eqn:nablaFesup}, this is at most $\mu(\Gamma \backslash \Hb) (2e^{\e} - 1)^2$.

We have therefore shown that for $F \in \Lip_1(\Gamma \backslash \Hb)$ and $\e > 0$,
\begin{multline*}
\left|\int_{\Gamma \backslash \Hb} F(z) \, d\nu_1(z) - \int_{\Gamma \backslash \Hb} F(z) \, d\nu_2(z)\right|	\\
\ll \frac{1}{T} + 2\e + \mu(\Gamma \backslash \Hb)^{\frac{1}{2}} (2e^{\e} - 1) \left(\sum_{f \in \BB} \frac{e^{-\frac{t_f^2}{T^2}}}{\frac{1}{4} + t_f^2} \left|\int_{\Gamma \backslash \Hb} f(z) \, d\nu_1(z) - \int_{\Gamma \backslash \Hb} f(z) \, d\nu_2(z)\right|^2 \right.	\\
\left. + \sum_{\af} \frac{1}{4\pi} \int_{-\infty}^{\infty} \frac{e^{-\frac{t^2}{T^2}}}{\frac{1}{4} + t^2} \left|\int_{\Gamma \backslash \Hb} E_{\af}\left(z,\frac{1}{2} + it\right) \, d\nu_1(z) - \int_{\Gamma \backslash \Hb} E_{\af}\left(z,\frac{1}{2} + it\right) \, d\nu_2(z)\right|^2 \, dt\right)^{\frac{1}{2}}.
\end{multline*}
Since $F \in \Lip_1(\Gamma \backslash \Hb)$ and $\e > 0$ were arbitrary, \eqref{eqn:compact} now follows via the Kantorovich--Rubinstein duality theorem \eqref{eqn:KRduality}.
\end{proof}

\section{Arithmetic Applications}

\subsection{Proof of \texorpdfstring{\hyperref[thm:Duke]{Theorem \ref*{thm:Duke}}}{Theorem \ref{thm:Duke}}}

We now prove \hyperref[thm:Duke]{Theorem \ref*{thm:Duke}} via the Berry--Esseen inequality given in \hyperref[thm:noncompact]{Theorem \ref*{thm:noncompact}}. The chief inputs are exact formul\ae{} for the Weyl sums $\int_{\Gamma \backslash \Hb} f(z) \, d\nu_D(z)$ and $\int_{\Gamma \backslash \Hb} E(z,\frac{1}{2} + it) \, d\nu_D(z)$ in terms of $L$-functions together with bounds for mixed moments of $L$-functions. Throughout, we choose the orthonormal basis $\BB$ of Maa\ss{} cusp forms on $\Gamma \backslash \Hb$ to consist of Hecke--Maa\ss{} cusp forms, which are joint eigenfunctions of the Hecke operators $T_n$ for all positive integers $n$.

\begin{lemma}[{Waldspurger \cite{Wal85} (see \cite[Theorems 3 and 5 and (5.17)]{DIT16})}]
\label{lem:DukeWaldspurger}
For a fundamental discriminant $D$ and for $f \in \BB$ and $t \in \R$, we have that
\begin{align*}
\left|\int_{\Gamma \backslash \Hb} f(z) \, \nu_D(z)\right|^2 & = \frac{H_{\sgn(D)}(t_f)}{8\sqrt{|D|} L(1,\chi_D)^2} \frac{L\left(\frac{1}{2},f\right) L\left(\frac{1}{2}, f \otimes \chi_D\right)}{L(1,\ad f)},	\\
\left|\int_{\Gamma \backslash \Hb} E\left(z,\frac{1}{2} + it\right) \, d\nu_D(z)\right|^2 & = \frac{H_{\sgn(D)}(t)}{4\sqrt{|D|} L(1,\chi_D)^2} \left|\frac{\zeta\left(\frac{1}{2} + it\right) L\left(\frac{1}{2} + it, \chi_D\right)}{\zeta(1 + 2it)}\right|^2,
\end{align*}
where $\chi_D$ denotes the primitive quadratic Dirichlet character modulo $|D|$ and
\begin{align*}
H_{-}(t) & \coloneqq 2\pi^2,	\\
H_{+}(t) & \coloneqq \frac{\Gamma\left(\frac{1}{4} + \frac{it}{2}\right)^2 \Gamma\left(\frac{1}{4} - \frac{it}{2}\right)^2}{\Gamma\left(\frac{1}{2} + it\right) \Gamma\left(\frac{1}{2} - it\right)}.
\end{align*}
\end{lemma}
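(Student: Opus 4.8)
The plan is to recognise both Weyl sums as toric or geodesic period integrals of $f$ against the measure $\nu_D$ and then to invoke the explicit form of Waldspurger's period formula established in \cite{DIT16}. By definition, for $D < 0$ the quantity $\int_{\Gamma \backslash \Hb} f(z) \, d\nu_D(z)$ is the average $\frac{1}{\#\Lambda_D} \sum_{z \in \Lambda_D} f(z)$ of $f$ over the Heegner points of discriminant $D$, while for $D > 0$ it is the length-normalised sum $\frac{1}{\sum_{\CC} \ell(\CC)} \sum_{\CC \in \Lambda_D} \int_{\CC} f(z) \, ds$ of geodesic periods. In each case this is precisely the period that is evaluated by the trivial class-group character component of the toric period attached to $\Q(\sqrt{D})$ (the narrow class group when $D > 0$).

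First I would apply Waldspurger's formula in the explicit form of \cite[Theorems 3 and 5]{DIT16}, which expresses the square of the unnormalised period --- the bare sum $\sum_{z \in \Lambda_D} f(z)$ for $D < 0$, respectively $\sum_{\CC} \int_{\CC} f \, ds$ for $D > 0$ --- in terms of a central value of a Rankin--Selberg $L$-function. The structural input is that the period taken against the \emph{trivial} class-group character detects the base change of $f$ to $\Q(\sqrt{D})$, so that its square is proportional to $L(\frac{1}{2}, \pi_{\Q(\sqrt{D})}) = L(\frac{1}{2},f) L(\frac{1}{2}, f \otimes \chi_D)$, with the denominator $L(1,\ad f)$ coming from the Petersson norm $\langle f, f\rangle$ via Rankin--Selberg unfolding. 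To pass to the normalised measure $\nu_D$ I would divide by $\#\Lambda_D$ (respectively by $\sum_{\CC} \ell(\CC)$); by the analytic class number formula each of these is proportional to $\sqrt{|D|} L(1,\chi_D)$, and combining its square $|D| L(1,\chi_D)^2$ with the intrinsic $\sqrt{|D|}$ scaling of the bare period yields the factor $\frac{1}{\sqrt{|D|} L(1,\chi_D)^2}$ in the statement.

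For the Eisenstein series the same dictionary applies, but the computation is more explicit and avoids the full theta machinery: one unfolds the period of $E(z, \frac{1}{2}+it)$ against $\nu_D$ and inserts the Fourier (or Epstein zeta) expansion of the Eisenstein series, evaluating the period directly. The resulting numerator $\zeta(\frac{1}{2}+it) L(\frac{1}{2}+it, \chi_D)$ is the Eisenstein analogue of the base-change $L$-value, with $\zeta(\frac{1}{2}+it)$ and $L(\frac{1}{2}+it,\chi_D)$ playing the roles of $L(\frac{1}{2},f)$ and $L(\frac{1}{2}, f \otimes \chi_D)$, while the denominator $\zeta(1+2it)$ reflects the standard normalisation of $E(z, \frac{1}{2}+it)$ and plays the role of $L(1,\ad f)$. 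This is the content of \cite[(5.17)]{DIT16}.

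The main obstacle will be the precise bookkeeping of the numerical constants and, above all, of the archimedean factor $H_{\sgn(D)}$, which encodes the local period integral at the real place. For $D < 0$ the associated torus is anisotropic and the Heegner points form a compact zero-dimensional cycle, so the archimedean integral contributes the constant $2\pi^2$; for $D > 0$ the torus is split and the period runs over a noncompact closed geodesic, so the archimedean integral instead produces the ratio $\Gamma(\frac{1}{4}+\frac{it}{2})^2 \Gamma(\frac{1}{4}-\frac{it}{2})^2 / (\Gamma(\frac{1}{2}+it)\Gamma(\frac{1}{2}-it))$ of Gamma functions. Unifying the two regimes under the single notation $H_{\sgn(D)}(t)$ and matching the remaining constants --- the factors of $8$ versus $4$ and the powers of $\pi$ --- against the normalisations of \cite{DIT16} is where the genuine care lies; by contrast, the underlying $L$-value identities are dictated by Waldspurger's theorem and are comparatively robust.
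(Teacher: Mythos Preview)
Your proposal is correct and matches the paper's treatment: the paper does not prove this lemma at all but simply states it with attribution to Waldspurger and the explicit formulation in \cite[Theorems~3 and~5 and~(5.17)]{DIT16}. Your sketch accurately describes how one extracts the stated identities from those references, including the role of the analytic class number formula in converting the unnormalised periods to the probability measures $\nu_D$ and the origin of the archimedean factor $H_{\sgn(D)}$.
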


\begin{proof}[Proof of {\hyperref[thm:Duke]{Theorem \ref*{thm:Duke}}}]
We apply \hyperref[thm:noncompact]{Theorem \ref*{thm:noncompact}} with $\Gamma = \SL_2(\Z)$, $\nu_1 = \nu_D$, $\nu_2 = \nu$, and $T = D^{1/12}$. By \hyperref[lem:DukeWaldspurger]{Lemma \ref*{lem:DukeWaldspurger}}, in order to prove the bound \eqref{eqn:W1Duke}, we must show that
\begin{multline}
\label{eqn:Dukemoment}
\frac{1}{8\sqrt{|D|} L(1,\chi_D)^2} \sum_{f \in \BB} \frac{H_{\sgn(D)}(t_f) e^{-\frac{t_f^2}{T^2}}}{\frac{1}{4} + t_f^2} \frac{L\left(\frac{1}{2},f\right) L\left(\frac{1}{2}, f \otimes \chi_D\right)}{L(1,\ad f)}	\\
+ \frac{1}{16\pi \sqrt{|D|} L(1,\chi_D)^2} \int_{-\infty}^{\infty} \frac{H_{\sgn(D)}(t) e^{-\frac{t^2}{T^2}}}{\frac{1}{4} + t^2} \left|\frac{\zeta\left(\frac{1}{2} + it\right) L\left(\frac{1}{2} + it, \chi_D\right)}{\zeta(1 + 2it)}\right|^2 \, dt \ll_{\e} |D|^{-\frac{1}{6} + \e}.
\end{multline}
By Stirling's formula, we have that
\[H_{-}(t) \ll 1, \qquad H_{+}(t) \ll \frac{1}{1 + |t|}.\]
After inputting Siegel's (ineffective) lower bound $L(1,\chi_D) \gg_{\e} |D|^{-\e}$, the bound \eqref{eqn:Dukemoment} follows via a dyadic subdivision of the sum over $f \in \BB$ and integral over $t \in \R$, H\"{o}lder's inequality with exponents $(\frac{1}{3},\frac{1}{2},\frac{1}{6})$, and the bounds, for $U \geq 1$,
\begin{align}
\label{eqn:weightedWeyl}
\sum_{\substack{f \in \BB \\ t_f \leq U}} \frac{1}{L(1,\ad f)} + \frac{1}{2\pi} \int_{-U}^{U} \left|\frac{1}{\zeta(1 + 2it)}\right|^2 \, dt & \ll U^2,	\\
\notag
\sum_{\substack{f \in \BB \\ t_f \leq U}} \frac{L\left(\frac{1}{2},f\right)^2}{L(1,\ad f)} + \frac{1}{2\pi} \int_{-U}^{U} \left|\frac{\zeta\left(\frac{1}{2} + it\right)}{\zeta(1 + 2it)}\right|^2 \, dt & \ll_{\e} U^{2 + \e}	\\
\notag
\sum_{\substack{f \in \BB \\ t_f \leq U}} \frac{L\left(\frac{1}{2},f \otimes \chi_D\right)^3}{L(1,\ad f)} + \frac{1}{2\pi} \int_{-U}^{U} \left|\frac{L\left(\frac{1}{2} + it,\chi_D\right)^3}{\zeta(1 + 2it)}\right|^2 \, dt & \ll_{\e} (|D| U)^{2 + \e}.
\end{align}
Here the first bound is simply the weighted Weyl law, which is a straightforward application of the Kuznetsov formula, while the second bound is a standard consequence of the approximate functional equation and the spectral large sieve. The third bound is due to Andersen and Wu \cite[Theorem 4.1]{AW23} (see also \cite[Theorem 11.1]{GHLN24}), building on earlier work of Conrey and Iwaniec \cite{CI00} and Young \cite{You17}. Note additionally that $L(\frac{1}{2},f \otimes \chi_D)$ is known to be nonnegative via the work of Waldspurger \cite{Wal81}.

Assuming the generalised Lindel\"{o}f hypothesis, we may instead take $T = |D|^{-1/4}$. We again input Siegel's lower bound and perform a dyadic subdivision, but then instead use the pointwise bounds
\begin{align*}
L\left(\frac{1}{2},f\right) L\left(\frac{1}{2}, f \otimes \chi_D\right) & \ll_{\e} (t_f |D|)^{\e},	\\
\left|\zeta\left(\frac{1}{2} + it\right) L\left(\frac{1}{2} + it,\chi_D\right)\right|^2 & \ll_{\e} ((1 + |t|) |D|)^{\e}
\end{align*}
that follow from the assumption of the generalised Lindel\"{o}f hypothesis, followed by the weighted Weyl law \eqref{eqn:weightedWeyl}. This yields the desired bound \eqref{eqn:W1DukeGLH}.
\end{proof}

\begin{remark}
In \cite[Proposition 2.14]{HR22}, it is shown that if $D$ is a squarefree fundamental discriminant, then
\begin{multline*}
\sum_{\substack{f \in \BB \\ U \leq t_f \leq 2U}} \frac{L\left(\frac{1}{2},f\right) L\left(\frac{1}{2},f \otimes \chi_D\right)}{L(1,\ad f)} + \frac{1}{2\pi} \int\limits_{U \leq |t| \leq 2U} \left|\frac{\zeta\left(\frac{1}{2} + it\right) L\left(\frac{1}{2} + it,\chi_D\right)}{\zeta(1 + 2it)}\right|^2 \, dt	\\
\ll_{\e} \begin{dcases*}
|D|^{\frac{1}{3} + \e} U^{2 + \e} & for $U \ll |D|^{\frac{1}{12}}$,	\\
|D|^{\frac{1}{2} + \e} & for $|D|^{\frac{1}{12}} \ll U \ll |D|^{\frac{1}{4}}$,	\\
|D|^{\e} U^{2 + \e} & for $U \gg |D|^{\frac{1}{4}}$.
\end{dcases*}
\end{multline*}
This yields the same bounds as those obtained via H\"{o}lder's inequality for $U \ll |D|^{1/12}$ and stronger bounds for $U \gg |D|^{1/12}$, but does not improve the bound \eqref{eqn:W1Duke}.
\end{remark}

\begin{remark}
There are different ways to quantify the rate of equidistribution in Duke's theorem other than bounds for the $1$-Wasserstein distance $\Wscr_1(\nu_D,\nu)$. For example, one can prove a variant of Duke's theorem involving shrinking targets, where one seeks to show that $\frac{\nu_D(B_D)}{\nu(B_D)} = 1 + o(1)$ for a sequence of sets $B_D$ of shrinking area, such as balls $B_R(z)$ whose radius $R$ shrinks as $|D|$ grows. Young \cite[Theorem 2.1]{You17} has proven a power-saving rate of equidistribution for this shrinking target problem (see also \cite[Theorem 1.24]{Hum18}). One can similarly prove bounds for the ball discrepancy
\[\sup_{B_R(z) \subset \Gamma \backslash \Hb} \left|\nu_D(B_R(y)) - \nu(B_R(z))\right|.\]
Finally, one can study the $L^2$-shrinking target problem, namely bounds (or even asymptotic formul\ae{}) for the variance
\[\int_{\Gamma \backslash \Hb} \left|\nu_D(B_R(z)) - \nu(B_R(z))\right|^2 \, d\nu(z)\]
with $R$ shrinking as $|D|$ grows; see, in particular, \cite{Fav22,Hum18,HR22}.
\end{remark}

\subsection{Proof of \texorpdfstring{\hyperref[thm:MaassQUE]{Theorem \ref*{thm:MaassQUE}}}{Theorem \ref{thm:MaassQUE}}}

Next, we prove \hyperref[thm:MaassQUE]{Theorem \ref*{thm:MaassQUE}} via the Berry--Esseen inequality given in \hyperref[thm:noncompact]{Theorem \ref*{thm:noncompact}}. Once more, the chief input is exact formul\ae{} for the Weyl sums $\int_{\Gamma \backslash \Hb} f(z) \, d\nu_g(z)$ and $\int_{\Gamma \backslash \Hb} E(z,\frac{1}{2} + it) \, d\nu_g(z)$ in terms of $L$-functions together with bounds for mixed moments of $L$-functions.

\begin{lemma}[{Watson \cite{Wat08}, Ichino \cite{Ich08} (see \cite[Proposition 2.8]{Hum18})}]
\label{lem:WatsonIchino}
For $f,g \in \BB$ and $t \in \R$, we have that
\begin{align*}
\left|\int_{\Gamma \backslash \Hb} f(z) \, d\nu_g(z)\right|^2 & = \frac{\pi H(t_f,t_g)}{8 L(1,\ad g)^2} \frac{L\left(\frac{1}{2},f\right) L\left(\frac{1}{2},\ad g \otimes f\right)}{L(1,\ad f)},	\\
\left|\int_{\Gamma \backslash \Hb} E\left(z,\frac{1}{2} + it\right) \, d\nu_g(z)\right|^2 & = \frac{\pi H(t,t_g)}{4 L(1,\ad g)^2} \left|\frac{\zeta\left(\frac{1}{2} + it\right) L\left(\frac{1}{2} + it,\ad g\right)}{\zeta(1 + 2it)}\right|^2,
\end{align*}
where
\begin{multline*}
H(t,t_g) \coloneqq \frac{\Gamma\left(\frac{1}{4} + \frac{it}{2}\right)^2 \Gamma\left(\frac{1}{4} - \frac{it}{2}\right)^2}{\Gamma\left(\frac{1}{2} + it\right) \Gamma\left(\frac{1}{2} - it\right)}	\\
\times \frac{\Gamma\left(\frac{1}{4} + \frac{i(2t_g + t)}{2}\right) \Gamma\left(\frac{1}{4} - \frac{i(2t_g + t)}{2}\right) \Gamma\left(\frac{1}{4} + \frac{i(2t_g - t)}{2}\right) \Gamma\left(\frac{1}{4} - \frac{i(2t_g - t)}{2}\right)}{\Gamma\left(\frac{1}{2} + it_g\right)^2 \Gamma\left(\frac{1}{2} - it_g\right)^2}.
\end{multline*}
\end{lemma}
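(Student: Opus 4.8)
The plan is to recognise each Weyl sum as an automorphic period and then to invoke the explicit local-global decomposition of such periods afforded by the triple product and Rankin--Selberg formulae. Since $d\nu_g(z) = |g(z)|^2 \, d\mu(z)$, the cuspidal Weyl sum is the triple product period
\[
\int_{\Gamma \backslash \Hb} f(z) \, d\nu_g(z) = \int_{\Gamma \backslash \Hb} f(z) |g(z)|^2 \, d\mu(z) = \langle f g, g \rangle,
\]
since $\bar{g}$ spans the contragredient $\widetilde{\pi_g} = \pi_g$ (Maa\ss{} forms being self-dual), while the Eisenstein Weyl sum is the Rankin--Selberg period $\int_{\Gamma \backslash \Hb} E(z,\tfrac{1}{2}+it) |g(z)|^2 \, d\mu(z)$. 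I would treat the two cases by their respective standard machinery.

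For the cuspidal case, I would pass to $\PGL_2(\A_\Q)$ and apply Ichino's refinement of the triple product formula to the representations $\pi_f, \pi_g, \pi_g$ attached to $f$ and $g$. This expresses $|\langle f g, g\rangle|^2$, after dividing by the three $L^2$-norms (all equal to $1$ by our normalisation), as a global constant times the central triple product $L$-value $L(\tfrac{1}{2}, \pi_f \times \pi_g \times \pi_g)$, divided by $L(1, \ad \pi_f) L(1, \ad \pi_g)^2$, times a product of normalised local integrals $I_v^\ast$. Because $\Gamma = \SL_2(\Z)$ has level one, every form is unramified at every finite place, so $I_p^\ast = 1$ for all primes $p$ and only the archimedean factor survives. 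I would then factor the triple product $L$-function using the Gelbart--Jacquet identity $\pi_g \boxtimes \pi_g \cong \ad \pi_g \boxplus \mathbf{1}$, which gives
\[
L\left(\tfrac{1}{2}, \pi_f \times \pi_g \times \pi_g\right) = L\left(\tfrac{1}{2}, \ad g \otimes f\right) L\left(\tfrac{1}{2}, f\right),
\]
yielding exactly the numerator and denominator $L(1, \ad f) L(1, \ad g)^2$ in the claimed identity.

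For the Eisenstein case I would argue classically by unfolding. Writing $E(z,s) = \sum_{\gamma \in \Gamma_\infty \backslash \Gamma} \Im(\gamma z)^s$ and unfolding against the $\Gamma$-invariant function $|g|^2$ reduces the period to $\int_0^\infty y^{s-2} \int_0^1 |g(x+iy)|^2 \, dx \, dy$. Inserting the Fourier--Whittaker expansion of $g$ and integrating in $x$ produces $\sum_n |\rho_g(n)|^2 \int_0^\infty y^{s-1} K_{it_g}(2\pi|n|y)^2 \, dy$. The Mellin transform of $K_{it_g}^2$ supplies the archimedean gamma factors, while Hecke multiplicativity gives the Rankin--Selberg factorisation $\sum_n |\lambda_g(n)|^2 n^{-s} = \zeta(s) L(s, \ad g)/\zeta(2s)$; evaluating at $s = \tfrac{1}{2}+it$ and taking absolute squares yields the stated formula.

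The main obstacle in both cases is the precise evaluation of the archimedean contribution, which is what defines the gamma-factor quantity $H(t_f, t_g)$ (respectively $H(t, t_g)$) and pins down the constants $\tfrac{\pi}{8}$ and $\tfrac{\pi}{4}$. In the cuspidal case this is the archimedean local triple product integral of three spherical Whittaker functions with spectral parameters $t_f, t_g, t_g$, evaluated by Watson; in the Eisenstein case it is the closed-form Mellin transform $\int_0^\infty y^{s-1} K_{it_g}(2\pi y)^2 \, dy$ expressed through ratios of gamma functions. Careful bookkeeping of the $L^2$-normalisation of $g$, of the global constant in Ichino's formula, and of which gamma factors belong to $H$ versus to the completed $L$-functions is what is needed to match the constants exactly as stated.
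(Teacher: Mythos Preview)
Your outline is correct and is precisely the standard route to these identities: Ichino's explicit triple product formula with trivial nonarchimedean local integrals at full level, the Gelbart--Jacquet factorisation $L(\tfrac{1}{2},\pi_f\times\pi_g\times\pi_g)=L(\tfrac{1}{2},f)L(\tfrac{1}{2},\ad g\otimes f)$, and the classical Rankin--Selberg unfolding for the Eisenstein period. The paper does not supply its own proof of this lemma; it is stated as a citation to Watson and Ichino (with the explicit constants as recorded in \cite[Proposition 2.8]{Hum18}), and your sketch is exactly what those references carry out.
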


\begin{proof}[Proof of {\hyperref[thm:MaassQUE]{Theorem \ref*{thm:MaassQUE}}}]
We apply \hyperref[thm:noncompact]{Theorem \ref*{thm:noncompact}} with $\Gamma = \SL_2(\Z)$, $\nu_1 = \nu_g$, $\nu_2 = \nu$, and $T = t_g^{1/2}$. By \hyperref[lem:WatsonIchino]{Lemma \ref*{lem:WatsonIchino}}, we must show that
\begin{multline}
\label{eqn:MaassQUEmoment}
\frac{\pi}{8 L(1,\ad g)^2} \sum_{f \in \BB} \frac{H(t_f,t_g) e^{-\frac{t_f^2}{T^2}}}{\frac{1}{4} + t_f^2} \frac{L\left(\frac{1}{2},f\right) L\left(\frac{1}{2},\ad g \otimes f\right)}{L(1,\ad f)}	\\
+ \frac{1}{16 L(1,\ad g)^2} \int_{-\infty}^{\infty} \frac{H(t,t_g) e^{-\frac{t^2}{T^2}}}{\frac{1}{4} + t^2} \left|\frac{\zeta\left(\frac{1}{2} + it\right) L\left(\frac{1}{2} + it,\ad g\right)}{\zeta(1 + 2it)}\right|^2 \, dt \ll_{\e} t_g^{-1 + \e}.
\end{multline}
By Stirling's formula, we have that
\[H(t,t_g) \ll \begin{dcases*}
\frac{1}{(1 + |t|) t_g} & if $|t| \leq t_g$,	\\
\frac{1}{(1 + 2t_g - |t|)^{1/2} t_g^{3/2}} & if $t_g \leq |t| \leq 2t_g$,	\\
\frac{e^{-\pi(|t| - 2t_g)}}{(1 + |t| - 2t_g)^{1/2} |t|^{3/2}} & if $2t_g \leq |t| \leq 3t_g$,	\\
\frac{e^{-\pi(|t| - 2t_g)}}{|t|^2} & if $|t| \geq 3t_g$.
\end{dcases*}\]
After inputting the Hoffstein--Lockhart lower bound $L(1,\ad g) \gg 1/\log t_g$ \cite{HL94}, the bound \eqref{eqn:MaassQUEmoment} follows via a dyadic subdivision, the pointwise bounds
\begin{align*}
L\left(\frac{1}{2},f\right) L\left(\frac{1}{2},\ad g \otimes f\right) & \ll_{\e} (t_f t_g)^{\e},	\\
\left|\zeta\left(\frac{1}{2} + it\right) L\left(\frac{1}{2} + it,\ad g\right)\right|^2 & \ll_{\e} ((1 + |t|) t_g)^{\e}
\end{align*}
that follow from the assumption of the generalised Lindel\"{o}f hypothesis, and the weighted Weyl law \eqref{eqn:weightedWeyl}.
\end{proof}

\begin{remark}
Just as for Duke's theorem, there are other methods to quantify the rate of equidistribution of mass of Hecke--Maa\ss{} cusp forms. Under the assumption of the generalised Lindel\"{o}f hypothesis, Young has proven small scale mass equidistribution in balls $B_R(z)$ whose radius $R$ shrinks at any rate slightly larger than $t_g^{-1/3}$ \cite[Proposition 1.5]{You16}. Young's result also gives a conditional resolution of a conjecture of Lou and Sarnak \cite[p.\ 210]{LS95} on the size of the ball discrepancy for this equidistribution problem, namely the bound
\[\sup_{B_R(z) \subset \Gamma \backslash \Hb} \left|\nu_g(B_R(y)) - \nu(B_R(z))\right| \ll_{\e} t_g^{-\frac{1}{2} + \e}.\]
Finally, one can study the $L^2$-shrinking target problem in this setting, namely bounds for the variance
\[\int_{\Gamma \backslash \Hb} \left|\nu_g(B_R(z)) - \nu(B_R(z))\right|^2 \, d\nu(z)\]
with $R$ shrinking as $t_g$ grows; in particular, under the assumption of the generalised Lindel\"{o}f hypothesis, one obtains equidistribution in almost every shrinking ball whose radius shrinks at any rate slightly larger than the Planck scale $t_g^{-1}$ \cite[Theorem 1.17]{Hum18}.
\end{remark}

\begin{remark}
There are several other variants of mass equidistribution of cusp forms that one can study. For example, one can prove an analogous variant of \hyperref[thm:MaassQUE]{Theorem \ref{thm:MaassQUE}}, again conditional on the generalised Lindel\"{o}f hypothesis, for the mass equidistribution in the \emph{weight aspect} of holomorphic Hecke cusp forms of increasing weight. This is due to the fact that the Watson--Ichino triple product formula again expresses the relevant Weyl sums in terms of $L$-functions, just as in \hyperref[lem:WatsonIchino]{Lemma \ref*{lem:WatsonIchino}}, and the generalised Lindel\"{o}f hypothesis bounds these essentially optimally.

In a different direction, one can prove \emph{unconditionally} a variant of \hyperref[thm:MaassQUE]{Theorem \ref{thm:MaassQUE}} for the mass equidistribution in the \emph{depth aspect} of Hecke--Maa\ss{} cusp forms of bounded spectral parameter and increasing prime power level $p^n$ with $p$ fixed and $n$ growing (or alternatively holomorphic Hecke cusp forms of bounded weight and increasing prime power level). This is due to work of Nelson, Pitale, and Saha, who prove unconditional power-saving bounds for the Weyl sums for this equidistribution problem \cite[Proposition 3.4]{NPS14}.

Mass equidistribution is also known \emph{unconditionally} for holomorphic Hecke cusp forms of increasing weight due to Holowinsky and Soundararajan \cite{Hol10,HS10,Sou10a} and for holomorphic Hecke cusp forms of increasing weight or arbitrary level (not necessarily a prime power) due to Nelson \cite{Nel11} and Nelson, Pitale, and Saha \cite{NPS14}. These results rely on a different spectral expansion on $L^2(\Gamma \backslash \Hb)$ involving \emph{incomplete} Eisenstein series. The treatment of the Weyl sums involving incomplete Eisenstein series, via sieve theory (see \cite{Hol10}), does not seem to apply directly to Eisenstein series. Since \hyperref[thm:noncompact]{Theorem \ref*{thm:noncompact}} involves Eisenstein series rather than incomplete Eisenstein series, this obstacle prevents \hyperref[thm:noncompact]{Theorem \ref*{thm:noncompact}} from being applicable to proving bounds for the $1$-Wasserstein distance in these equidistribution problems.
\end{remark}

\phantomsection
\addcontentsline{toc}{section}{Acknowledgements}
\hypersetup{bookmarksdepth=-1}

\subsection*{Acknowledgements}

I would like to thank Emmanuel Kowalski and Th\'{e}o Untrau for useful discussions.

\hypersetup{bookmarksdepth}

\end{document}